\newtheorem{example}{Example}
\newtheorem{theorem}{Theorem}
\newtheorem{prop}{Proposition}
\newenvironment{idea}{%
  \proof}{\endproof}
\title{How does information affect asymmetric congestion games?}
\author{
	Charlotte Roman$^1$\footnote{Contact Author} and
	Paolo Turrini$^2$ \\
	\small{$^1$Department of Mathematics, University of Warwick}\\
	\small{$^2$Department of Computer Science, University of Warwick}\\
	\small{\{c.d.roman, p.turrini\}@warwick.ac.uk}
}
\date{}	
\begin{document}

\maketitle

	\begin{abstract}	
	We study traffic networks with multiple origin-destination pairs, relaxing the simplifying assumption of agents having complete knowledge of the network structure. We identify a ubiquitous class of networks, i.e., rings, for which we can safely increase the agents' knowledge without affecting their own overall performance -- known as immunity to Informational Braess' Paradox -- closing a gap in the literature. We also extend our performance measure to include the welfare of all agents, showing that under this measure IBP is a widespread phenomenon and no network is immune to it. 
	\end{abstract}
	
	\section{Introduction}
	With the rising popularity of GPS route-guidance systems, many travellers rely on information about route choice to help them make decisions. It is natural to assume that more information about paths available to a driver would only reduce their expected journey time. However, this is has been shown not to be the case, due to a phenomenon called informational Braess' paradox \cite{Acemoglu}: as presenting new options can change the decision-making of some self-interested agents, this can cause an overall re-routing which makes them worse off.

Congestion games are the standard framework in AI and, more specifically, algorithmic game theory to study the equilibria of traffic flows. These are non-cooperative game where self-interested actors choose sets of available resources, where the cost of each resource depends on its overall usage. In the traffic-specific Wardrop model \cite{Wardrop}, resources form an undirected network in which the players wish to travel between origin and destination nodes, and the cost of each edge often represents expected travel time. Rational players seek to minimise their overall cost by selecting the appropriate path. All players have an origin-destination (OD) pair which corresponds to the nodes they wish to travel between and players with the same OD pair are grouped into populations. 
	
	It is likely that travel costs will change over time such as road improvements or temporary construction works. Braess' paradox (BP) \cite{Braess} occurs when the cost of a resource is reduced but the total cost to the population increases. BP assumes the state of the system is a user equilibrium (or Wardrop equilibrium); where every player has minimised personal costs of travel given the actions of others. BP is based on the assumption that agents have complete information about the network structure. But this assumption is often not met in practical situation, where actors typically have bounded knowledge of their available paths. Recently, information constrained user equilibrium (ICUE) \cite{Acemoglu} has been introduced, where each player minimises their travel costs given their information set (edges of the network). The equilibrium is reached through one-step improvements and gives rise to informational Braess' paradox (IBP) which occurs when a user's cost at ICUE increases as a result of their information set expanding. 
	
ICUE is a significantly more realistic tool to predict traffic flows, but the study of equilibria in congestion games with incomplete information is still at its infancy. In particular, while the relationship between network structure and immunity to IBP has been characterised for single-population congestion games \cite{Acemoglu}, for the more complex and realistic multi-population variant the exact conditions are unknown. On top of that IBP has only been formulated looking at one group, i.e., the one that acquires new knowledge, but not at the welfare of all agents, which is the more social cost metric used for congestion games in general.

\paragraph{Our contribution.}
In this paper we advance the analysis of congestion games played by heterogenous boundedly rational agents, in two important ways: first, we establish how a ubiquitous class of networks, rings, are immune to IBP, settling a conjecture in \cite{Acemoglu}. Second, we extend the analysis of IBP to take into account the welfare of all agents, rather than only a group of them, showing that under this measure IBP is a widespread phenomenon and no network is immune to it.
Our analysis is an important  first step for the design of road systems that do not penalise the acquisition of new knowledge, while establishing that such events do have consequences for the system as a whole.
	
\paragraph{Related Literature.}

	Since the seminal work by Braess \cite{Braess}, the topic of routing paradoxes has been a topic extensively studied \cite{Murchland, Zhao}. Congestion games appear often in transportation systems \cite{Fisk,Pas,Yao,Zhao} but they also apply to a wide variety of real-world systems, with important applications to telecommunication \cite{Orda} and electrical networks \cite{rough}. 
	
	 Pas and Principio \cite{Pas} classified demand constraints and linear cost functions that cause Braess' paradox in traffic games on Braess-like networks. Milchtaich \cite{Milchtaich} singled out the topological conditions for an undirected network to be immune to Braess' paradox for a single population. More generally, Epstein et al. \cite{Epstein} considered topologies in which every Nash equilibrium is socially optimal. In asymmetric games, conditions for network immunity to Braess' paradox was found by Chen et al. \cite{Chen}. More recently, Fujishige et al. \cite{Fujishige} proved the sufficient combinatorial property of strategy spaces for nonatomic congestion games for which Braess' paradox cannot occur, to be those with matroid bases. 
		
	Player heterogeneity has been a topic of much interest,  e.g., through player-specific resource cost functions \cite{Milchtaich2} representing varied preferences \cite{Cole} or uncertainties \cite{artur,Sekar}, while more complex models may consider driver's uncertainties over road conditions or demand \cite{Meir}. There is evidence that providing incomplete information to drivers about road capacities may be worse than providing no information at all \cite{Arnott}. Along the same line, Liu et al. \cite{Liu} studied heterogeneity among players regarding the quality of information they receive and how that affects the equilibrium costs. The informational Braess' paradox was posed by Acemoglu et al. \cite{Acemoglu} for differing levels of information about possible paths. They classified network topologies as immune to this when considering a single OD pair, and show examples of networks that have immunity in the asymmetric game.
	
\paragraph{Paper Structure.} Section \ref{sec:preliminaries} presents the preliminary notions and definitions needed for our results. Section \ref{sec:networks} proves that multi-population congestion games on ring networks will not allow travellers acquiring knowledge to be negatively impacted by information. A variant of performance measure, i.e., social cost, is posed in Section \ref{sec:ibpsc}, which induces losses in utility by some agent independently of the network topology. We conclude discussing future research directions.

\section{Preliminaries}\label{sec:preliminaries}
\subsection{Congestion Games}
Let $N=\{ 1,...,n\}$  be a nonempty finite set of agents populations. In each population, we suppose there exists heterogeneity among knowledge of the resources (due to previous experience, use of GPS systems etc.), i.e., $K_i \geq 1$ information types of players in each population $i$.  We refer to a player from population $i$ of type $k$ as $(i,k)$, which we abbreviate $ik$, where the demand for a type, i.e., the traffic rate associated to that population, is $d_{ik}\geq 0$. 
	
Each population has a nonempty finite resource set $E_i$, where information types can restrict such knowledge, i.e., each population-type pair is associated to a {\em known} set $E_{ik} \subseteq E_i$. We assume that each $E_i$ is made of \textit{relevant} resources, i.e., those which are used in at least one strategy, and that strategy sets $S_{ik} \subseteq 2^{E_{ik}}$ only contain resources from their information set and are disjoint for distinct populations. Denote $E$ as the \textit{irredundant} resource set $E = \bigcup_{i \in N} E_i $. Finally, resource cost functions $c_e: \mathds{R}_{ \geq 0} \rightarrow \mathds{R}_{ \geq 0} \cup \{ \infty \}$ such that $e \in E$ are assumed to be continuous, non-decreasing and non-negative. Formally, a \textit{nonatomic congestion game} is defined as a tuple $\mathcal{M} = (N,(K_i),(E_{ik}),(S_{ik}), (c_e)_{e \in E}, (d_{ik}))$, with $i \in N$ and $ik \in K_i$.
		
The outcome of all players of type $(i,k)$ choosing strategies leads to a strategy distribution $\bm{x}^{ik}$ satisfying $\sum_{s_{ik} \in S_{ik}} x^{ik}_{s_{ik}} = d_{ik}$ and $x^{ik}_{s_{ik}} \geq 0, \,  \forall s_{ik} \in S_{ik}$. A strategy distribution or outcome $\bm{x} = (\bm{x}^{ik})_{\{i \in N, \, k \in K_i\}}$ is \textit{feasible} if $\sum_{s_{ik} \in S_{ik} } x^{ik}_{s_{ik}} = d_{ik}, \, \forall i \in N,\, k \in K_i$. For the rest of the paper we focus, without loss of generality, on feasible strategies. 
	
Denote the \textit{load} on $e$ in an outcome $\bm{x}$ to be $f_e(\bm{x}) =\sum_{i \in N} \sum_{s_i \in S_i} \chi_{\{e \in s_i\} } x^i_{s_i}$. In $\bm{x}$, a player from population $i$ receives a cost function $C(s_{ik},\bm{x})=C_{ik}(s_{ik},\bm{x})=\sum_{e \in s_{ik}} c_e(f_e(\bm{x}))$ when selecting strategy $s_{ik} \in S_{ik}$. 
	
An \textit{information constrained user equilibrium} (ICUE) is a strategy distribution $\bm{x}$ such that  every player of every population chooses a strategy with minimum cost: $\forall i \in N, \, k \in K_i$ and strategies $s_{ik}, s'_{ik} \in S_{(i,k)}$ such that $x^i_{s_{ik}}>0$ we have $C_{ik}(s_{ik},\bm{x}) \leq C_{ik}(s'_{ik},\bm{x})$. The \textit{social cost} is the total cost incurred to all players $SC(\bm{x}) =\sum_{i \in N} \sum_{k \in K_i} C_{ik} (\bm{x})d_{ik}$. 

Braess' paradox is a phenomenon that arises when the cost of a resource is strictly decreased yet results in a strict increase in the social cost of the equilibria. This is motivated by example shown on the Wheatstone network in Figure \ref{fig:Braess}. 
	 
    \begin{center}
	    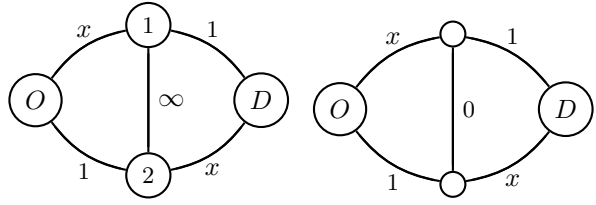
\begin{figure} 
	    	\begin{center}
				\begin{tikzpicture}[shorten >=2pt, thick]
				\node[circle,draw] (A) at (0,0) {$O$};
				\node[circle,draw] (B) at (1.5,1) {\small {1}};
				\node[circle,draw] (C) at (1.5,-1) {\small{2}};
				\node[circle,draw] (D) at (3,0) {$D$};
				\draw[-] (A) to[bend left=20] node[above] {$x$} (B);
				\draw[-] (B) to[bend right=20] node[above] {} (A);
				\draw[-] (B) to[bend left=20] node[above] {\small $1$} (D);
				\draw[-] (D) to[bend right=20] node[above] {} (B);
				\draw[-] (A) to[bend right=20] node[below] {\small $1$} (C);
				\draw[-] (C) to[bend left=20] node[below] {} (A);
				\draw[-] (C) to[bend right=20] node[below] {$x$} (D);
				\draw[-] (D) to[bend left=20] node[below] {} (C);
				\draw[-] (B) to node[right] {$\infty$} (C);
				\draw[-] (C) to node[right] {} (B);   
				\end{tikzpicture}\hspace{0.2cm}
				\begin{tikzpicture}[shorten >=2pt, thick]
				\node[circle,draw] (A) at (0,0) {$O$};
				\node[circle,draw] (B) at (1.5,1) {};
				\node[circle,draw] (C) at (1.5,-1) {};
				\node[circle,draw] (D) at (3,0) {$D$};
				\draw[-] (A) to[bend left=20] node[above] {$x$} (B);
				\draw[-] (B) to[bend right=20] node[above] {} (A);
				\draw[-] (B) to[bend left=20] node[above] {\small $1$} (D);
				\draw[-] (D) to[bend right=20] node[above] {} (B);
				\draw[-] (A) to[bend right=20] node[below] {\small $1$} (C);
				\draw[-] (C) to[bend left=20] node[below] {} (A);
				\draw[-] (C) to[bend right=20] node[below] {$x$} (D);
				\draw[-] (D) to[bend left=20] node[below] {} (C);
				\draw[-] (B) to node[right] {\small $0$} (C);
				\draw[-] (C) to node[right] {} (B);
				\end{tikzpicture}
				\caption{Braess' paradox on the Wheatstone network. When $d=1$, the social cost of travel is $\frac{3}{2}$ before and $2$ after reducing the costs of the middle edge.}
				\label{fig:Braess}
			\end{center}
		\end{figure}
	\end{center}
	 
	 \begin{example} 
    Consider a Wardrop traffic game as depicted in Figure \ref{fig:Braess}. Suppose there is a single population of unit size that wish to travel between nodes $O$ and $D$. In the first instance of cost functions, any UE requires that $\frac{1}{2}$ choose the path $\{ O,1,D \}$ and $\frac{1}{2}$ choose the path $\{ O,2,D \}$ generating a social cost of $\frac{3}{2}$. By reducing the cost of an edge, the resulting UE requires that all players now choose to travel the path $\{ O,1,2,D \}$. This increases the social cost of the equilibrium to $2$, despite only reducing the costs of edges. 
    \end{example}
	 
A set of systems $(E, S_i)_{i \in N}$ admits \textit{Braess' paradox} (BP)  if there are two nonatomic congestion games $\mathcal{M}=(N,E,(S_i)_{i \in N},(c_e)_{e \in E}, (d_i)_{i \in N})$ and $\mathcal{M}'=(N,E,(S_i)_{i \in N},(c'_e)_{e \in E}, (d'_i)_{i \in N})$ where $c'_e(t) \leq c_e(t)$ such that $\forall t \geq 0$ and $d'_i \leq d_i,$ $\forall i \in N$, and two UE $\bm{x}$ and $\bm{x}'$, such that $ SC(\bm{x}) < SC(\bm{x}')$. If no such $\mathcal{M}$ and $\mathcal{M}'$ exist, then we say that the network is \textit{immune} to Braess' paradox. 
	 
\textit{Informational Braess' paradox} (IBP) occurs when one player's type has their information set expanded, without loss of generality type $(1,1)$, and this paradoxically increases their strategy cost. More formally, IBP occurs if there exists expanded information sets $(\tilde{E}_{(i,k)})_{\{ i \in N, \, k \in K_i\}}$ with $E_{(1,1)} \subset \tilde{E}_{(1,1)}$ and $E_{(i,k)} = \tilde{E}_{(i,k)}$ for any $(i,k) \neq (1,1)$ with associated ICUE $\bm{x}$ and $\tilde{\bm{x}}$ where the costs increase for the expanded information player $C_{(1,1)}(\bm{x}) < C_{(1,1)}(\tilde{\bm{x}})$.
	
	\subsection{Graph Theory}

A \textit{simple network} $G=(V,E)$ is an undirected graph with at most one edge between any pair of nodes and no self-loops. A \textit{path} is an ordered collection of edges such that adjacent pairs of edges share a node. If a path visits no node more than once then it is called \textit{acyclic}. A \textit{tree} is a connected simple network that has only acyclic paths. A \textit{ring} is a network such that every node connects to exactly two others, forming a single continuous loop.
	
	A \textit{network congestion game} is played on an undirected network $G=(V,E)$, where the resources are edges and players move between the distinct origin and destination terminal nodes $O_i, D_i \in V$ for any $i \in N$. The strategies of players are paths where no vertex is visited more than once such that the start and end nodes are the associated origin and destination. If a network is \textit{two-terminal} then there is a single origin and destination pair for players to travel between. An \textit{asymmetric} or \textit{multi-population} game is one in which there are multiple $OD$ pairs.
	
	We say that a two-terminal network is \textit{linearly independent} (LI) if each path has at least one edge that does not belong to any other path. A network is \textit{series linearly independent} (SLI) if and only if (i) it comprises a single LI network, or (ii) it is constructed by connecting two SLI networks in series.  For two SLI networks $G_i$ and $G_j$, a \textit{coincident block} is a common LI subnetwork of $G_i$ and $G_j$ with the same set of terminal nodes. An \textit{embedding} is a collection of injective maps from the sets of relevant resources to the irredundant resources. For example, Figure \ref{fig:lattice} shows how the Wheatstone network is embedded in a grid road system. 
	
	\begin{figure} [h]
		\begin{center}
			\begin{tikzpicture}[shorten >=2pt, thick]
			\node[circle,draw] (A) at (2,0) {};
			\node[circle,draw, line width=0.7mm] (B) at (2,1) {$D$};
			\node[circle,draw,line width=0.7mm] (C) at (1,0) {};
			\node[circle,draw] (D) at (0,1) {};
			\node[circle,draw,line width=0.7mm] (E) at (0,0) {$O$};
			\node[circle,draw,line width=0.7mm] (F) at (1,1) {};
			\draw[-] (A) to node {} (B);
			\draw[-] (B) to node {} (A);
			\draw[-] (A) to node {} (C);
			\draw[-] (C) to node {} (A);
			\draw[-] (C) to node {} (F);
			\draw[-] (F) to node {} (C);
			\draw[-] (F) to node {} (B);
			\draw[-] (B) to node {} (F);
			\draw[-] (F) to node {} (D);
			\draw[-] (D) to node {} (F);
			\draw[-] (D) to node {} (E);
			\draw[-] (E) to node {} (D);
			\draw[-] (C) to node {} (E);
			\draw[-] (E) to node {} (C);
			\end{tikzpicture}
			\caption{A Wheatstone network is embedded in any grid.}
			\label{fig:lattice}
		\end{center}
	\end{figure}
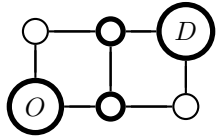
	
	\subsection{Immunity to IBP}	The following theorems from the literature specify the known conditions for IBP to not occur. The first result is for congestion games with only one population of players. For multiple populations, the conditions for immunity to IBP are unknown, with Theorem \ref{theorem:acemloglu} establishing a fairly direct sufficient condition for the multi-population case.
	\begin{theorem} \cite{Acemoglu} \label{theorem:sli}
		A two-terminal network congestion game played on network $G$ is immune to IBP if and only if $G$ is an SLI network.
	\end{theorem}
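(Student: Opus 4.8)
The plan is to prove the two implications separately: that SLI networks are immune (handled by induction on the series decomposition, with a linearly--independent base case) and that every non-SLI network admits IBP (via a structural dichotomy followed by an explicit construction).

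\textbf{SLI $\Rightarrow$ immune.} Write the SLI network $G$ as LI blocks $H_1,\dots,H_r$ joined in series at cut vertices $O=v_0,v_1,\dots,v_r=D$. Every $O$--$D$ path meets $v_0,\dots,v_r$ in order, so each strategy splits uniquely into a concatenation of one path per block, and each known set $E_{(1,1)}$ splits as $\bigcup_j\big(E_{(1,1)}\cap E(H_j)\big)$; enlarging $E_{(1,1)}$ only enlarges its trace on individual blocks. One then checks that any ICUE of $\mathcal M$ restricts, block by block, to an ICUE on $H_j$ that still carries the full demand profile $(d_{ik})$, and that $C_{(1,1)}(\bm x)=\sum_j C_{(1,1)}(\bm x^j)$. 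It therefore suffices to prove the statement for a single LI block and sum the per-block costs. For that base case I would establish the key lemma: \emph{on a two-terminal LI network, enlarging the known path set of one type (with all other types' known sets fixed) does not increase the equilibrium cost of that type.} The proof uses the defining feature of an LI network --- each path $P_j$ owns a private edge $e_j$ lying on no other path --- so that the load on $e_j$ equals precisely the flow routed through $P_j$. Since all ICUE of a fixed game induce the same edge latencies (they minimise a common convex potential), the cost seen by type $(1,1)$ is well-defined; I would then argue by contradiction, showing that if the enlarged game raised that cost above its former value $L$, the newly revealed path --- which under the old latencies must have had cost below $L$, else the old equilibrium would persist --- still attracts flow, whose private-edge latency then pins a strictly cheaper available path into existence, contradicting equilibrium. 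Tracking exactly which paths gain and lose flow, and how their private-edge latencies move, is where I expect the real work to lie.

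\textbf{Non-SLI $\Rightarrow$ IBP.} I would first isolate the graph-theoretic content: a two-terminal network that is \emph{not} SLI must contain an embedded Wheatstone-type obstruction --- essentially two $O$--$D$ routes arranged so that revealing one further edge creates a profitable zig-zag detour, the minimal case being the network of Figure~\ref{fig:Braess}. On that embedded gadget I reproduce the Braess example in informational form: give the four outer edges latencies $x,1,1,x$, the cross edge the constant latency $0$, and a single type of unit demand; when type $(1,1)$ knows only the two outer routes the ICUE cost is $\tfrac32$, while revealing the cross edge makes the zig-zag route available and pushes the ICUE cost to $2$. Extending the latencies to the rest of $G$ by putting a prohibitively large constant latency on every edge outside the gadget makes those edges unused at any ICUE, so the full game's ICUE restricts to the gadget's, yielding $C_{(1,1)}(\bm x)=\tfrac32<2=C_{(1,1)}(\tilde{\bm x})$.

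The two genuine obstacles are thus the LI monotonicity lemma --- whose exchange argument must accommodate heterogeneous information types sharing common edge latencies --- and the structural characterisation of non-SLI two-terminal networks by an embedded obstruction; the series-decomposition and instance-construction steps around them are comparatively routine.
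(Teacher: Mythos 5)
The paper itself offers no proof of this theorem --- it is imported verbatim from \cite{Acemoglu} --- so your attempt can only be measured against the argument in that reference. Your treatment of the direction SLI $\Rightarrow$ immune is structurally the right one and matches the literature: decompose into series-irreducible LI blocks, observe that an ICUE and the known sets restrict block by block with the full demand passing through every cut vertex, and reduce to a monotonicity lemma on a single LI network whose proof exploits the private edge of each route and the essential uniqueness of equilibrium edge costs. That lemma is where the substance lies and you only sketch it, but the plan is sound.

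The converse direction contains a genuine gap. Your structural dichotomy --- ``a two-terminal network that is not SLI must contain an embedded Wheatstone-type obstruction'' --- is false, and the Wheatstone gadget of Figure~\ref{fig:Braess} is the only obstruction for which you actually construct an IBP instance. Consider the network with terminals $O,D$, one internal vertex $v$, two parallel edges $a_1,a_2$ from $O$ to $v$, two parallel edges $b_1,b_2$ from $v$ to $D$, and a direct edge $e$ from $O$ to $D$. The route $\{a_1,b_1\}$ has no private edge, so the network is not LI; the route $\{e\}$ avoids $v$, so there is no cut vertex and the network is series-irreducible, hence not SLI. Yet it has a single internal vertex, and embeddings (subdivision, edge addition, terminal extension) cannot manufacture the two degree-$3$ internal vertices of the Wheatstone network from it, so it contains no embedded Wheatstone and no ``zig-zag detour'' of the kind your construction relies on. This is precisely why the proof in \cite{Acemoglu} first establishes a forbidden-embedding characterisation of SLI networks involving \emph{several} minimal obstructions (Wheatstone being only one of them) and then exhibits a separate IBP instance, with its own information sets and cost functions, on each; the step you dismiss as ``comparatively routine'' is in fact the bulk of the work in this direction. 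A secondary, fixable issue: when lifting a gadget instance to the ambient network, assigning a prohibitively large constant cost to \emph{every} non-gadget edge does not cleanly reduce the game to the gadget when the gadget's terminals are interior vertices of $G$, since different access routes may traverse different numbers of expensive edges; one should instead assign zero cost along the subdivision and access paths of the embedding and effectively infinite cost elsewhere.
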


	\begin{theorem} \cite{Acemoglu} \label{theorem:acemloglu}
		For any asymmetric network congestion game on network $G$, where $\forall i \in N$ $G_i=(V_i,E_i)$ is the relevant network, IBP does not occur if the following hold: \\
		(a) $\forall i \in N$, $G_i$ is SLI \\
		(b) For all distinct $i,j \in N$, either $E_i \cup E_j = \emptyset$, or $E_i \cup E_j$ consists of all coincident blocks of $G_i$ and $G_j$.
	\end{theorem}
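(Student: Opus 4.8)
The plan is to reduce the statement to the single-population Theorem~\ref{theorem:sli}, applied independently inside each linearly independent block of the network belonging to the population that acquires information. Assume, without loss of generality, that type $(1,1)$ is the one whose information set expands, from $E_{(1,1)}$ to $\tilde E_{(1,1)}\subseteq E_1$, and let $\bm x$ and $\tilde{\bm x}$ be associated ICUE. By hypothesis (a), $G_1$ is SLI, so it has a series decomposition $G_1 = B_1\to\cdots\to B_m$ into LI blocks, where $B_\ell$ has terminal nodes $v_{\ell-1},v_\ell$ and $O_1=v_0$, $D_1=v_m$. Every $O_1$--$D_1$ path crosses each $B_\ell$ exactly once between $v_{\ell-1}$ and $v_\ell$, and since $(1,1)$ must have at least one available path its information set contains a segment inside every block; hence the cost to type $(1,1)$ decomposes as $C_{(1,1)}(\bm x)=\sum_{\ell=1}^{m}\gamma_\ell(\bm x)$, where $\gamma_\ell(\bm x)$ is the cost of crossing $B_\ell$, and likewise for $\tilde{\bm x}$. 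It thus suffices to prove $\gamma_\ell(\tilde{\bm x})\le\gamma_\ell(\bm x)$ for every $\ell$.

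Fix $\ell$. If some edge of $B_\ell$ is also used by another population $j$, then by hypothesis (b) that edge lies in a coincident block of $G_1$ and $G_j$; since such a coincident block is an LI subnetwork common to $G_1$ and the SLI network $G_j$ with matching terminal nodes, it must be a series block of both, hence equal to $B_\ell$, so the whole demand $d_j$ crosses $B_\ell$ between $v_{\ell-1}$ and $v_\ell$. Consequently the edges of $B_\ell$ carry flow only from the types of population $1$ and from those populations $j$ for which $B_\ell$ is a coincident block, all travelling between $v_{\ell-1}$ and $v_\ell$; the restriction of the game to $E(B_\ell)$ is therefore a self-contained two-terminal congestion game $\mathcal B_\ell$ on the LI network $B_\ell$. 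Because the cost of a path in an SLI network is the sum of its independent block costs, the restriction of $\bm x$ to $B_\ell$ is an ICUE of $\mathcal B_\ell$, and so is the restriction of $\tilde{\bm x}$, the only difference between the two instances of $\mathcal B_\ell$ being a possible enlargement of the $(1,1)$-information set. Since $B_\ell$ is LI, hence SLI, Theorem~\ref{theorem:sli} says $\mathcal B_\ell$ is immune to IBP; together with the fact that immunity to IBP also forces the equilibrium cost of each player to be invariant across ICUE --- which handles those blocks where the $(1,1)$-information was not in fact enlarged --- this gives $\gamma_\ell(\tilde{\bm x})\le\gamma_\ell(\bm x)$. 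Summing over $\ell$ yields $C_{(1,1)}(\tilde{\bm x})\le C_{(1,1)}(\bm x)$, i.e., IBP does not occur.

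I expect the crux to be making the block decomposition rigorous: proving that the restriction of a global ICUE to an LI block is an ICUE of the induced two-terminal game, and that no cross-population interference leaks between blocks. This is exactly where both hypotheses are used --- (a) forces every population sharing an edge with $B_\ell$ to route its \emph{entire} demand through $B_\ell$ between the same pair of terminals, and (b) confines all shared edges to coincident blocks rather than allowing them to be scattered across $G_1$. Subsidiary points needing care are: pinning down the definition of coincident block so that it is genuinely a series block of both networks (an LI subnetwork of an SLI network with matching terminals cannot straddle a cut-node, so this should be forced); the degenerate case in which two populations share no edges, and the case in which one block is common to several populations at once; and the use of Theorem~\ref{theorem:sli} in the slightly strengthened form which, when the information set is unchanged, reduces to uniqueness of the equilibrium cost --- needed because ICUE are not unique as flow profiles, even on LI networks.
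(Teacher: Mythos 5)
The paper does not prove this theorem --- it is imported verbatim from Acemoglu et al.\ \cite{Acemoglu} --- so there is no in-paper argument to compare yours against; I can only judge the proposal on its own terms. Read that way, your reduction is the right one and essentially tracks how the result is actually established: condition (b) (note that the paper's ``$E_i \cup E_j$'' must be read as $E_i \cap E_j$, as you implicitly do) confines all cross-population interaction to coincident blocks, which by definition are LI blocks appearing with the same terminals in the series decompositions of both networks; hence each block $B_\ell$ of $G_1$, together with the full demands of every population touching it, is a self-contained two-terminal game with multiple information types, and the splice-one-segment argument correctly shows that the restriction of a global ICUE to $B_\ell$ is an ICUE of that block game (the spliced path stays acyclic because distinct blocks meet only at cut vertices). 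The flows entering each block are the same before and after the expansion, so the blocks really do decouple.

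The one substantive gap is the step you flag at the end but then lean on as a ``fact'': that in a block where type $(1,1)$'s information set did \emph{not} change, the two restricted ICUE give $(1,1)$ the same crossing cost. This does not follow from immunity to IBP as defined --- the definition of IBP requires a \emph{strict} expansion $E_{(1,1)}\subset\tilde E_{(1,1)}$, so Theorem~\ref{theorem:sli} is silent about two ICUE of one and the same block game. What you need is the essential-uniqueness property of ICUE costs on LI (or SLI) two-terminal networks, which is a separate lemma that must be proved independently (and which, in \cite{Acemoglu}, is established first and then used to derive immunity, not the other way around). Until that lemma is supplied, the inequality $\gamma_\ell(\tilde{\bm x})\le\gamma_\ell(\bm x)$ is unjustified on every block whose intersection with $\tilde E_{(1,1)}\setminus E_{(1,1)}$ is empty, and the sum over $\ell$ does not go through. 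Fixing this requires an actual argument (e.g., the standard ``compare the max overcharged used path against the min undercharged path'' exchange argument on an LI block), not just a citation of Theorem~\ref{theorem:sli}.
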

	Notice that the conditions from Theorem \ref{theorem:acemloglu} do not depend on information restrictions. It does, however, depend on the maximal information sets of populations which combine to form the relevant network.
	
	\section{Circuit games and IBP}\label{sec:networks}
	In this section, we formally introduce circuits, most commonly found in matroid theory, for which immunity to IBP is conjectured \cite{Acemoglu}. Consider a set system $(E,\bm{\mathcal{C}})$, where $E$ is the set of resources and $\bm{\mathcal{C}} \subseteq 2^{E}$, with the following axioms:
	 \begin{itemize}
	 	\item $\emptyset \notin \bm{\mathcal{C}}$; 
	 	\item If $\mathcal{C}_1,\mathcal{C}_2 \in \bm{\mathcal{C}}$ and $\mathcal{C}_1 \subseteq \mathcal{C}_2$, then $\mathcal{C}_1=\mathcal{C}_2$,
	 	\item For any two distinct $\mathcal{C}_1,\mathcal{C}_2 \in \bm{\mathcal{C}}$ such that $e \in \mathcal{C}_1 \cap \mathcal{C}_2$, there is a member $\mathcal{C}_3 \in \bm{\mathcal{C}}$ such that $\mathcal{C}_3 \subseteq (\mathcal{C}_1 \cup \mathcal{C}_2)\backslash \{e \}$.
	 \end{itemize} 
 	Then $\bm{\mathcal{C}}$ is a \textit{circuit} over $E$. A  \textit{circuit game} is a congestion game in which every relevant network $G_i=(V_i,E_i)$ $\forall i \in N$ is a circuit.
 	We now establish this useful lemma, whose proof we only sketch due to limitations of space.
	
	\begin{prop} \label{prop:circuit}
		A circuit game cannot exist on a network $G$ that is not simple. 
	\end{prop}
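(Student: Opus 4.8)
The plan is to assume, toward a contradiction, that $G$ is not simple and to extract a violation of one of the three circuit axioms for the strategy set $S_i$ of some population $i$ (this is what it means for the relevant network $G_i$ to be a circuit). A non-simple $G$ contains either a self-loop or a pair of parallel edges. Self-loops are immediate: a self-loop at a vertex $v$ lies on no simple path, hence in no strategy, contradicting the standing assumption that every resource of $E=\bigcup_i E_i$ is relevant. So I may assume there are distinct edges $e_1,e_2$ both joining a pair of vertices $u,v$.

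The first step is a \emph{swap}. Fix a population $i$ for which $e_1$ is relevant and a strategy $P\in S_i$ containing $e_1$, oriented so that $P$ crosses from $u$ to $v$ along $e_1$. Since $e_2$ is parallel to $e_1$, the path $P':=(P\setminus\{e_1\})\cup\{e_2\}$ is again a simple $O_i$--$D_i$ path, hence $P'\in S_i$, and in particular $e_2\in E_i$ too. Because no simple path can use both of two parallel edges, $P\ne P'$ and $P\cap P'=P\setminus\{e_1\}=P'\setminus\{e_2\}$.

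The heart of the argument is to feed $P$ and $P'$ into the circuit-elimination axiom. Assuming for the moment $|P|\ge 2$, pick $e\in P\cap P'$; elimination produces a strategy $\mathcal C_3\in S_i$ with $\mathcal C_3\subseteq(P\cup P')\setminus\{e\}=(P\cap P')\cup\{e_1,e_2\}\setminus\{e\}$. Now $\mathcal C_3$ is itself a simple $O_i$--$D_i$ path, while every edge of the path $P$ is a cut edge separating $O_i$ from $D_i$ within $P$; hence $P\setminus\{e_1,e\}$ contains no $O_i$--$D_i$ path, which forces $\mathcal C_3$ to use $e_1$ or $e_2$ but, being simple, not both. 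If $e_1\in\mathcal C_3$ then $\mathcal C_3\subseteq P\setminus\{e\}\subsetneq P$, so $\mathcal C_3$ is a proper subset of the member $P$ of $S_i$, contradicting the antichain axiom; the case $e_2\in\mathcal C_3$ is symmetric with $P'$ in the role of $P$. The case of parallel edges spread across several relevant networks needs no separate treatment, since the swap keeps the path simple and thereby forces both $e_1$ and $e_2$ into the same $E_i$.

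The step I expect to be the main obstacle is the degenerate sub-case $|P|=1$, i.e.\ $P=\{e_1\}$ with $u=O_i$ and $v=D_i$, so that $e_1,e_2$ are parallel edges joining a population's own terminals: then $P\cap P'=\emptyset$, the swap only yields the singleton strategies $\{e_1\},\{e_2\}$, and circuit elimination is vacuous. I would rule this out by appealing to the structural content of a circuit (equivalently a ring, in which every node is incident to two \emph{distinct} neighbours): no population's relevant network can contain two parallel edges joining its terminals, so this configuration does not occur.
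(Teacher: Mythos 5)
Your main case is fine on your reading of the definition, but the degenerate case you flag as ``the main obstacle'' is a genuine gap, and it is exactly where the content of the proposition lives. If $e_1,e_2$ are parallel edges joining a population's own terminals, no circuit axiom excludes the configuration: $\{\{e_1\},\{e_2\}\}$ satisfies all three axioms vacuously (the intersection is empty, so elimination never fires, and the antichain condition holds), and $\{e_1,e_2\}$ is itself a two-edge cycle, hence a legitimate circuit of the multigraph. Your appeal to ``the structural content of a circuit (equivalently a ring, in which every node is incident to two distinct neighbours)'' is circular -- that the nodes of the relevant ring have two \emph{distinct} neighbours is precisely the simplicity claim under proof. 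Nor is the configuration exotic: it is the Pigou network, which this paper later treats as ``a two-node, two-edge ring,'' i.e.\ the simplest circuit network. The paper closes this case by a different, game-theoretic argument: the antichain property forces $E_i=\{e_1,e_2\}$ (a circuit contained in a circuit), so population $i$ travels only between the endpoints of the parallel pair; any other population whose strategies touch $e_1,e_2$ is then forced onto the same two-edge circuit and hence shares $i$'s OD pair, contradicting the distinctness of populations (and if no other population touches them, $i$ can be excised to give an equivalent game). You need an argument of this kind, not a structural assertion, to finish.

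A secondary point of comparison: you apply the circuit axioms to the strategy family $S_i$, whereas the paper reads ``$G_i$ is a circuit'' as ``$E_i$ is a single circuit of the graphic matroid of $G$,'' i.e.\ the relevant edge set is a cycle. Under the paper's reading your swap-and-eliminate argument is not needed at all: for parallel edges anywhere in $E_i$, the two-cycle $\{e_1,e_2\}$ is a circuit contained in the circuit $E_i$, so the antichain property collapses $E_i$ to $\{e_1,e_2\}$ in one line -- which lands you immediately in the terminal-pair case above. So your elimination-axiom argument elegantly handles the case the paper dispatches trivially, while the case requiring real work is the one you set aside.
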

	\begin{proof}
		By contradiction. Suppose we have a network $G$ that is not simple. Choose two nodes where there exist multiple edges between them- $e_1,e_2$. Suppose a population $i$ of players can use $e_1$ in their strategy then $e_2 \in E_i$ since they connect the same nodes. By definition of a circuit game, the relevant network of that population must be a circuit $\mathcal{C}_i$. We must have $e_1, e_2 \in \mathcal{C}_i$. Consider $\mathcal{C}'_i=\{e_1,e_2\}$. It is a dependent set since it is a cycle. Any of its proper subsets are independent, therefore, it is a circuit. By the axioms of circuits, if $\mathcal{C}'_i \subseteq \mathcal{C}_i$ then $\mathcal{C}_i=\mathcal{C}'_i$. So the population $i$ can only travel between the end nodes of $e_1$ and $e_2$. There must exist another population $j$ whose strategies also include $e_1$ and $e_2$ since otherwise, the congestion game of populations $N \backslash \{ i \}$ is an equivalent game (one with the same equilibria). Population $j$ must also have a relevant network that is a circuit and hence must travel on the circuit $\mathcal{C}_j=\{ e_1, e_2 \} = \mathcal{C}_i$. This implies that the populations are indistinct which is a contradiction. Hence, $G$ must be simple.
	\end{proof}
	
	Proposition \ref{prop:circuit}, notice, implies that any ring forms a circuit game. Now we prove the following:
	
	\begin{prop} \label{2circuit}
		Any two-terminal circuit game is immune to IBP.
	\end{prop}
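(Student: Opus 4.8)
\begin{idea}
The plan is to first fix the combinatorial shape of the game and then work with the potential-function description of ICUE, so that the whole question reduces to a monotonicity statement about the equilibrium cost of a single arc.

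\textbf{Reducing to the cycle structure.} Let $O,D$ be the two terminals. Since every strategy of a population is an $O$--$D$ path and $E_i$ is the union of the strategies used by population $i$, the hypothesis that each relevant network $G_i=(V_i,E_i)$ is a circuit forces $E_i$ to be a single cycle through $O$ and $D$; hence $G_i$ has exactly two $O$--$D$ paths, the arcs $P_i$ and $Q_i$ of that cycle, which are moreover edge-disjoint. Consequently the known set $E_{ik}$ of any type $(i,k)$ must contain one of the two arcs entirely, so the strategy set of $(i,k)$ is one of $\{P_i\}$, $\{Q_i\}$, $\{P_i,Q_i\}$. The only expansion of $E_{(1,1)}$ that can change anything is one that enlarges the strategy set of $(1,1)$ from a single arc to both arcs, so we may assume $(1,1)$ passes from $\{Q_1\}$ to $\{P_1,Q_1\}$, the relevant network $C_1=P_1\cup Q_1$ being the same before and after. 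Note it does not suffice to argue from the joint network $\bigcup_i E_i$, which can fail to be SLI -- several cycles through $O,D$ can realise a $K_4$-subdivision -- so the argument must use that each population owns only the two arcs of a cycle, not just the topology of the joint network.

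\textbf{A potential-function viewpoint.} As is standard, an ICUE of a nonatomic congestion game is exactly a minimiser of the Beckmann potential $\Phi(\bm f)=\sum_{e}\int_{0}^{f_e}c_e(t)\,dt$ over the convex polytope of feasible outcomes respecting the information constraints; in particular the equilibrium edge costs $c_e(f_e)$, hence every type cost, are uniquely determined, so it is enough to exhibit one post-expansion ICUE that does not hurt $(1,1)$. For $m\in[0,d_{(1,1)}]$ let $\bm x(m)$ be an ICUE of the game obtained from the expanded one by forcing type $(1,1)$ to place $m$ units on $P_1$ and $d_{(1,1)}-m$ on $Q_1$, and put $h(m)=\Phi(\bm f(\bm x(m)))$. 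Then $h$ is convex on $[0,d_{(1,1)}]$; $\bm x(0)$ is the pre-expansion ICUE; $\bm x(m^\ast)$ for $m^\ast\in\arg\min h$ is a post-expansion ICUE; and, by the envelope theorem, $h'(m)=C(P_1,\bm x(m))-C(Q_1,\bm x(m))$. If $m^\ast=0$ this forces $C(P_1,\bm x(0))\ge C(Q_1,\bm x(0))$, so $(1,1)$ has no profitable deviation, $\bm x(0)$ is already a post-expansion ICUE, and the cost of $(1,1)$ is unchanged. If $m^\ast>0$, then in the post-expansion ICUE $\tilde{\bm x}:=\bm x(m^\ast)$ type $(1,1)$ puts positive mass on $P_1$, so $C_{(1,1)}(\tilde{\bm x})=C(P_1,\tilde{\bm x})\le C(Q_1,\tilde{\bm x})$; hence it is enough to show $C(Q_1,\bm x(m))\le C(Q_1,\bm x(0))$ for the relevant $m$ -- more than enough, that $m\mapsto C(Q_1,\bm x(m))$ is non-increasing.

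\textbf{Why the cost of $Q_1$ cannot rise (the main obstacle).} It remains to prove that shifting type $(1,1)$ from $Q_1$ onto $P_1$ never raises the equilibrium cost of $Q_1$. I would compare $\bm x(m)$ with $\bm x(0)$ through the flow difference $\bm g=\bm f(\bm x(m))-\bm f(\bm x(0))$: since every population routes between $O$ and $D$ along the two arcs of its own cycle, $\bm g$ decomposes as $\sum_i\delta_i\bm\gamma_i$, where $\bm\gamma_i$ is the unit circulation around $C_i$ and $\delta_i$ is the net mass population $i$ moves toward $P_i$. One then combines: (i) the monotonicity inequality $\langle\bm g,\ \bm c(\bm f(\bm x(m)))-\bm c(\bm f(\bm x(0)))\rangle\ge 0$, valid because each edge term $(f'_e-f_e)(c_e(f'_e)-c_e(f_e))$ is non-negative as $c_e$ is non-decreasing, rewritten through the decomposition as a sum of per-population differences of arc costs; (ii) the equilibrium conditions of every population $i\neq 1$ at $\bm x(m)$ and at $\bm x(0)$ -- if $i$ shifts toward $P_i$ then $P_i$ is weakly the cheaper arc at $\bm x(m)$ and weakly the dearer arc at $\bm x(0)$; and (iii) the fact that $\bm g$, being a circulation, crosses every $O$--$D$ cut with zero net flow, which forces the extra load that the rerouting of the other populations deposits on the edges of $Q_1$ to be no larger than the load type $(1,1)$ itself withdraws from those edges. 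Together these should give $C(Q_1,\bm x(m))\le C(Q_1,\bm x(0))$, and hence $C_{(1,1)}(\tilde{\bm x})\le C_{(1,1)}(\bm x)$, so IBP cannot occur. The delicate point -- the heart of the proof -- is making the bookkeeping in (iii) rigorous when arbitrarily many cycles $C_i$ overlap one another, and in several disjoint arcs at once; in the small cases the load on the shared edges turns out to be exactly conserved, and the general argument has to capture precisely this cancellation.
\end{idea}
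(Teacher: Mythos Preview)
You have worked much harder than the proposition requires, because you have misread the setup. In this paper, populations are in bijection with origin--destination pairs (``players with the same OD pair are grouped into populations''; an ``asymmetric or multi-population game is one in which there are multiple OD pairs''). Hence a \emph{two-terminal} circuit game has a single population $N=\{1\}$, and there is exactly one relevant network $G_1=(V_1,E_1)$, which by hypothesis is a circuit. By Proposition~\ref{prop:circuit} this circuit is a simple cycle through $O$ and $D$; its two $O$--$D$ paths are the two edge-disjoint arcs, so each path owns an edge not in the other, the network is LI, hence SLI, and Theorem~\ref{theorem:sli} gives immunity to IBP immediately. That is the entire content of the paper's proof.

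Your worry that ``the joint network $\bigcup_i E_i$ can fail to be SLI'' therefore does not arise here: there is only one $E_i$. The potential-function/envelope argument you sketch is aimed at a genuinely harder statement --- essentially the multi-population case treated later in Theorem~\ref{theorem:circuit} --- and even for that target you correctly flag that step (iii), controlling the load on $Q_1$ when many cycles overlap, is the crux and is left unproved. So as a proof of Proposition~\ref{2circuit} your plan is massively over-engineered and still incomplete at its hardest point, whereas the intended proof is a two-line reduction to Theorem~\ref{theorem:sli}. If you want to salvage the potential-function approach, it is better deployed as an alternative attack on Theorem~\ref{theorem:circuit}; for the present proposition, simply invoke the SLI characterisation.
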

	The proof of Proposition \ref{2circuit} follows from Theorem \ref{theorem:sli} since it can be shown, using Proposition \ref{prop:circuit}, that a circuit game network is SLI. This result confirms the current classification of IBP immunity from Theorem \ref{theorem:acemloglu}.

	When considering multiple origin-destination pairs, the circuit axioms now apply to slightly more complex structures than simple rings. A circuit games can comprise connected rings such that the OD pairs do not allow for traversal between rings. Before we can prove the immunity to IBP for such structures, we pose the more general statement that not all player types can be negatively impacted by a single information expansion. We omit the proof for the general case, but we give the idea behind the proof technique and present the full reasoning for the case of $n=2$.
	
	\begin{prop} \label{prop:nonempty}
Let  $\mathcal{M}$ be an asymmetric circuit game and and let $(\tilde{E}_{(i,k)})_{\{ i \in N, \, k \in K_i\}}$ be expanded information sets such that $E_{(1,1)} \subset \tilde{E}_{(1,1)}$ and $E_{(i,k)} = \tilde{E}_{(i,k)}$ for any $(i,k) \neq (1,1)$, wit associated ICUE $\bm{x}$ and $\tilde{\bm{x}}$. Then there exists at least one player type $(i,k)$ $i \in N, k \in K_i$ such that $C_{(i,k)}(\bm{\tilde{x}}) \leq C_{(i,k)}(\bm{x})$.
	\end{prop}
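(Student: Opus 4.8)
The plan is to reduce the statement to a ``monotonicity'' claim about edge loads and then exploit the ring structure that circuit games enforce. First I would establish the following reduction: \emph{if some player type uses, in the original ICUE $\bm x$, a strategy all of whose edges carry (weakly) smaller load in $\tilde{\bm x}$ than in $\bm x$, then that type witnesses the proposition.} Indeed, fix a type $(i,k)$ with $d_{ik}>0$ and a strategy $s$ with $x^{ik}_s>0$; since $\bm x$ is an ICUE we have $C_{(i,k)}(\bm x)=\sum_{e\in s}c_e(f_e(\bm x))$, and since the information of $(1,1)$ only expands (all other strategy sets being unchanged) the strategy $s$ is still available to $(i,k)$ in the game behind $\tilde{\bm x}$, whence $C_{(i,k)}(\tilde{\bm x})\le\sum_{e\in s}c_e(f_e(\tilde{\bm x}))$; as each $c_e$ is non-decreasing, $f_e(\tilde{\bm x})\le f_e(\bm x)$ on $s$ gives $C_{(i,k)}(\tilde{\bm x})\le C_{(i,k)}(\bm x)$. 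Writing $E^{\uparrow}=\{e\in E:f_e(\tilde{\bm x})>f_e(\bm x)\}$, it therefore suffices to find a type whose $\bm x$-support contains a strategy disjoint from $E^{\uparrow}$.

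Next I would set up the ring bookkeeping. By Proposition~\ref{prop:circuit} each relevant network $G_i$ is a simple ring, so the only $O_i$--$D_i$ routes are its two arcs, and in any ICUE population $i$ merely splits its fixed demand $d_i=\sum_k d_{ik}$ between these two arcs (a type whose information omits one arc being confined to the other). The expansion adds only edges of $R_1$ to $(1,1)$'s information; if it does not complete the second arc of $R_1$ it creates no new route, the game behind $\tilde{\bm x}$ coincides with the one behind $\bm x$, one has $\tilde{\bm x}=\bm x$ under the one-step-improvement convention (equivalently, equal equilibrium costs), and any type works. So I may assume the expansion gives $(1,1)$ access to the second arc of $R_1$, and I would then record, for every population, its arc split in $\bm x$ and in $\tilde{\bm x}$ and which arc each type uses, noting that the edges private to an arc a population vacates must strictly lose load and that a population shifts demand toward an arc only once that arc has become the (weakly) cheaper one.

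The final step — and the main obstacle — is to locate a $\bm x$-used arc disjoint from $E^{\uparrow}$. Assume for contradiction that every arc used in $\bm x$ meets $E^{\uparrow}$. For each such arc one asks which population raised its load; the ICUE conditions in $\tilde{\bm x}$ together with the two bookkeeping facts above constrain the answer, and since each ring's demand is conserved, load increases cannot be propagated consistently around all the rings at once. For $n=2$ I would finish by a finite case analysis: the shared arc $M=R_1\cap R_2$ lies, inside each of $R_1$ and $R_2$, either within one arc or straddling both, and in each ring either arc may or may not remain the cheaper one between $\bm x$ and $\tilde{\bm x}$; checking the resulting combinations shows that some arc used in $\bm x$ avoids $E^{\uparrow}$, contradicting the assumption and, by the reduction, proving the claim. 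For general $n$ the two-ring bookkeeping is replaced by an acyclicity argument on the relation ``the shift of population $j$ raised a cost-relevant edge of population $i$''. I expect essentially all the difficulty to sit here: controlling the cross-loading between overlapping rings and excluding the cyclic load-increase pattern, with conservation of each ring's demand as the lever that must be turned into a contradiction.
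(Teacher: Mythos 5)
Your overall strategy coincides with the paper's: assume every type's cost strictly rises, use monotonicity of the $c_e$ to convert this into strict load increases on the strategies used at $\bm{x}$, and then use conservation of each population's demand on its ring to derive a contradiction. Your opening reduction is correct (it is the contrapositive of the paper's observation that, under the contradiction hypothesis, every strategy in the $\bm{x}$-support must contain an edge whose load strictly increases), and the ring bookkeeping via Proposition~\ref{prop:circuit} matches the paper's setup exactly.

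The genuine gap is that the decisive step --- showing it is combinatorially impossible for every arc used at $\bm{x}$ to meet $E^{\uparrow}$ --- is announced rather than carried out, and that step is where essentially all of the content of the proposition sits. Even for $n=2$ you write ``checking the resulting combinations shows\dots'' without exhibiting the check; the paper's entire worked example is precisely that check: it derives four inequalities (i)--(iv) on the demand shifts and verifies that (i) contradicts (iv) and (ii) contradicts (iii) in every sub-case of which arc population $2$ actually uses. The concrete mechanism you would need to make explicit is that on the $2n$-edge ring the signed demand shifts of the populations enter the load changes of complementary arcs with opposite signs (in the four-edge ring, $f_{e_1}+f_{e_4}$ and $f_{e_2}+f_{e_3}$ are both constant equal to the total demand, so $\Delta f_{e_1}=-\Delta f_{e_4}$ and $\Delta f_{e_2}=-\Delta f_{e_3}$); demanding a load increase on every used arc then forces a pair of mutually exclusive inequalities. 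You must also dispose of arcs carrying zero flow at $\bm{x}$, since your reduction only applies to support strategies --- this is exactly what generates the paper's boundary sub-cases $p\in\{0,1\}$ and $\tilde{p}\in\{0,1\}$, and your sketch does not address it. For general $n$ your ``acyclicity argument'' is not specified at all; the paper itself only sketches that case, so you are no worse off there, but as it stands your proposal stops exactly where the difficulty begins.
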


\begin{idea}
		In a circuit, we can show that each population has at most two strategies. Each population that has only one strategy will not affect the equilibria before and after the information expansion, except the type $(1,1)$ whose strategy set expands. So we can assume that for $n$ populations there are at most $n+1$ types. Since there are $n$ populations with distinct OD pairs, the game must be embedded in a $2n$-edge circuit. Now if we compare the contradiction assumption for each of the types, we know that the strategy the use before the expansion must have strictly less flow on at least one of the edges in the strategy than afterwards. Now if we compare all $n+1$ inequalities we will find that there is always a contradiction since the demands of populations must be non-negative. 
	\end{idea}
	
	The following example shows the  for the case where $n=2$ as displayed in Figure \ref{fig:nonempty}. 
	\begin{center}
		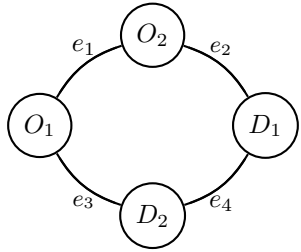
\begin{figure} [h]
			\begin{center}
				\begin{tikzpicture}[shorten >=2pt, thick]
				\node[circle,draw] (A) at (0,0) {$O_1$};
				\node[circle,draw] (B) at (1.5,1.2) {$O_2$};
				\node[circle,draw] (C) at (1.5,-1.2) {$D_2$};
				\node[circle,draw] (D) at (3,0) {$D_1$};
				\draw[-] (A) to[bend left=20] node[above] {$e_1$} (B);
				\draw[-] (B) to[bend right=20] node[above] {} (A);
				\draw[-] (B) to[bend left=20] node[above] {\small $e_2$} (D);
				\draw[-] (D) to[bend right=20] node[above] {} (B);
				\draw[-] (A) to[bend right=20] node[below] {\small $e_3$} (C);
				\draw[-] (C) to[bend left=20] node[below] {} (A);
				\draw[-] (C) to[bend right=20] node[below] {$e_4$} (D);
				\draw[-] (D) to[bend left=20] node[below] {} (C);
				\end{tikzpicture}
				\caption{A circuit with two populations.}
				\label{fig:nonempty}
			\end{center}
		\end{figure}
	\end{center}
	\begin{example}
		Let there be two populations each with distinct $OD$ pairs as shown in Figure \ref{fig:nonempty}. Let the strategy sets of player types: $(1,1)$ before $S_{11}=\{s_1\}$ and after expansion $\tilde{S}_{11}=\{s_1,s_2\}$, type $(1,2)$ has strategies $S_{12}=\{s_1,s_2\}$, and type $(2,2)$ has $S_{22}=\{t_1,t_2\}$. The demands for these populations are $d_{11}>0, \, d_{12} \geq 0, \, d_{22}>0$. In order to reach a contradiction, suppose that there does not exist such a player. Then $\forall i \in \{1,2\}, \, k \in K_i$ we have $C_{(i,k)}(\bm{\tilde{x}}) > C_{(i,k)}(\bm{x})$. 
		
		Consider the feasible strategy distributions $\bm{x}$. Since type $(1,1)$ only has one strategy we have $x^{s_1}_{11} = d_{11}$. In order for every player's cost to increase, we must have that $(1,1)$ strictly prefers to deviate to the other strategy hence $(1,2)$ must choose $s_2$. This gives us $x^{s_2}_{12} = d_{12}$. Now suppose that $(2,2)$ plays $x^{t_1}_{22} = pd_{22}$, $x^{t_2}_{22} = (1-p)d_{22}$. The cost functions for the players are: 
		\[ 
		\begin{array}{lll}C_{11}(\bm{x})= & c_{e_1}(f_1) +  c_{e_2}(f_2)& \\
		C_{12}(\bm{x})= & c_{e_3}(f_3) + c_{e_4}(f_4)& \\ 
		C_{22}(\bm{x})= & \begin{cases}
		c_{e_1}(f_1)+ c_{e_3}(f_3) & p \in [0,1) \\
		c_{e_2}(f_2) + c_{e_4}(f_4) & p \in (0,1]
		\end{cases} 
		\end{array} 
		\]
		where 
		\[ 
		\begin{array}{ll}
		f_1= & d_{11}+(1-p)d_{22} \\
		f_2= & d_{11}+pd_{22} \\ 
		f_3= & d_{12}+(1-p)d_{22} \\
		f_4= & d_{12}+pd_{22} 
		\end{array}
		\]
		Now consider the possible strategy distributions $\tilde{\bm{x}}$. Without loss of generality, assume that both $(1,1)$ and $(1,2)$ have the same strategy distribution. Consider the following strategy distribution for population 1: $\tilde{x}^{s_1}_{1k}= d_{1k}$, $k \in \{1,2\}$. This could only be an ICUE if it was a dominant strategy given the total demand of population 2, which we know is not true given the deviation from $s_1$ in strategy distribution $\bm{x}$. So we must have $\tilde{x}^{s_1}_{1k}=qd_{1k}$ and $\tilde{x}^{s_1}_{1k}=(1-q)d_{1k}$ where $k \in \{1,2\}$ and $q \in [0,1)$. For population 2, let the strategy distribution be $x^{t_1}_{22} = \tilde{p}d_{22}$, $x^{t_2}_{22} = (1-\tilde{p})d_{22}$. The cost functions after information expansion are:
		\[
		\begin{array}{ll}
		C_{11}(\bm{\tilde{x}})= & c_{e_3}(\tilde{f}_3) + c_{e_4}(\tilde{f}_4) \\
		C_{12}(\bm{\tilde{x}})= & c_{e_3}(\tilde{f}_3) + c_{e_4}(\tilde{f}_4) \\
		C_{22}(\bm{\tilde{x}})= & \begin{cases}
		c_{e_1}(\tilde{f}_1) +  c_{e_3}(\tilde{f}_3) & \tilde{p} \in [0,1)\\
		c_{e_2}(\tilde{f}_2) + c_{e_4}(\tilde{f}_4) & \tilde{p} \in (0,1]
		\end{cases} \\
		\end{array}
		\]
		where
		\[ 
		\begin{array}{ll} 
		\tilde{f}_1= & q(d_{11}+d_{12})+(1-\tilde{p})d_{22} \\
		\tilde{f}_2= & q(d_{11}+d_{12})+\tilde{p}d_{22} \\
		\tilde{f}_3= & (1-q)(d_{11}+d_{12})+(1-\tilde{p})d_{22} \\ 
		\tilde{f}_4= & (1-q)(d_{11}+d_{12})+\tilde{p}d_{22} \\
		\end{array}
		\]
		The contradiction assumption
		$C_{11}(\bm{x}) < C_{11}(\bm{\tilde{x}})$ gives us:
		\[ 
		c_{e_1}(f_1) + c_{e_2}(f_2) < c_{e_3}(\tilde{f}_3) + c_{e_4}(\tilde{f}_4) \leq c_{e_1}(\tilde{f}_1) + c_{e_2}(\tilde{f}_2)
		\] 
		Since cost functions are nondecreasing, this implies that we must have $f_1<\tilde{f}_1$ or $f_2<\tilde{f}_2$. In terms of demands: (i) $d_{11}+(1-p)d_{22} < q(d_{11}+d_{12})+(1-\tilde{p})d_{22}$ or (ii) $d_{11}+pd_{22}<q(d_{11}+d_{12})+\tilde{p}d_{22}$. If both these hold, we have $\tilde{p}+\frac{(1-q)d_{11}-qd_{12}}{d_{22}}<p$ and $\tilde{p}+\frac{(1-q)d_{11}-qd_{12}}{d_{22}}>p$ which leads to a contradiction.
		
		The contradiction assumption for player type $(1,2)$, $C_{12}(\bm{x}) < C_{12}(\bm{\tilde{x}})$, leads to
		\[
		c_{e_3}(f_3) + c_{e_4}(f_4) <  c_{e_3}(\tilde{f}_3) + c_{e_4}(\tilde{f}_4)
		\] 
		By nondecreasing cost functions, we have (iii) $d_{12}+(1-p)d_{22}<(1-q)(d_{11}+d_{12})+(1-\tilde{p})d_{22}$ and/or (iv) $d_{12}+pd_{22}<(1-q)(d_{11}+d_{12})+\tilde{p}d_{22}$. 
		But notice that (i) and (iv) contradict each other, in addition (ii) and (iii) contradict each other. So we either have both (i) and (iii) holding true, or the case where (ii) and (iv) hold true. Inequality (iii) cannot hold unless (i) holds and (iv) cannot hold without (ii). So we either need (i) or (ii) to hold. 
		
		Finally, consider $C_{22}(\bm{x}) < C_{22}(\bm{\tilde{x}})$.
		Consider the case where $p, \tilde{p} \in (0,1)$. Then we must have 
		\[ 
		c_{e_1}(f_1) + c_{e_3}(f_3) < c_{e_1}(\tilde{f}_1) + c_{e_3}(\tilde{f}_3) \]
		\[
		c_{e_2}(f_2) + c_{e_4}(f_4) < c_{e_2}(\tilde{f}_2) + c_{e_4}(\tilde{f}_4)
		\] 
		
		Similarly, by nondecreasing cost functions, we must have either (v) $d_{11}+(1-p)d_{22}<q(d_{11}+d_{12})+(1-\tilde{p})d_{22}$ or (vi) $d_{12}+(1-p)d_{22}<(1-q)(d_{11}+d_{12})+(1-\tilde{p})d_{22}$. These are equivalent to (i) and (iii) respectively. It must also be true that at least one of $d_{11}+pd_{22}<(q(d_{11}+d_{12})+\tilde{p}d_{22}$ or $d_{12}+pd_{22}<(1-q)(d_{11}+d_{12})+\tilde{p}d_{22}$ is true. However, these are equivalent to (ii) and (iv) respectively. So this reaches a contradiction.  \footnote{Notice that $d_{12}$ can be reduced from all inequalities so we can omit type $(1,2)$ and get the same result.} 
		
		Now suppose $p=[0,1), \, \tilde{p} \in [0,1)$. Then we have: 
		\[
		c_{e_1}(f_1) + c_{e_3}(f_2) < c_{e_1}(\tilde{f}_1) + c_{e_3}(\tilde{f}_3)
		\]
		So we have that (i) and (iii) hold. This gives us $\frac{(1-q)d_{11}-qd_{12}}{d_{22}}+\tilde{p} < p$ which means $p>0$ since $d_{11},d_{22}>0$.
		So $\tilde{p}=0$ otherwise we have the case as above. Now this implies that 
		\[  
		c_{e_2}(f_2) + c_{e_4}(f_4) < c_{e_2}(\tilde{f}_2) +  c_{e_4}(\tilde{f}_4)  
		\] 
		This again will lead to a contradiction.
		
		Now suppose $p=(0,1],\, \tilde{p} \in (0,1]$.
		\[ 
		c_{e_2}(f_2) + c_{e_4}(f_4) < c_{e_2}(\tilde{f}_2) + c_{e_4}(\tilde{f}_4)
		\] 
		So we have that (ii) and (iv) hold. This gives us $\frac{(1-q)d_{12}-qd_{11}}{d_{22}}+p < \tilde{p}$, using similar reasoning as above we see that we must have $p<1$ and $\tilde{p}=1$. But this tells us:
		\[ 
		c_{e_1}(f_1) + c_{e_3}(f_3) < c_{e_1}(\tilde{f}_1) + c_{e_3}(\tilde{f}_3)
		\]
		Hence, we reach a contradiction. 
		
		Now suppose $p=[0,1),\, \tilde{p}=(0,1]$. Then we see that 
		\[
		c_{e_1}(f_1) + c_{e_3}(f_3) <
		c_{e_1}(\tilde{f}_1) + c_{e_3}(\tilde{f}_3)
		\] 
		So we must have that (i) and (iii) hold so we have $\frac{(1-q)d_{11}-qd_{12}}{d_{22}}+\tilde{p} < p$. Hence, we must have $p, \tilde{p} \in (0,1)$ which we have already shown to be contradictory.
		
		Now suppose $p=(0,1],\, \tilde{p}=[0,1)$.
		\[ 
		c_{e_2}(d_{11}+pd_{22}) + c_{e_4}(d_{12}+pd_{22}) < c_{e_2}(\tilde{f}_2) + c_{e_4}(\tilde{f}_4)
		\] 
		This implies $\frac{(1-q)d_{11}-qd_{12}}{d_{22}}+p < \tilde{p}$ must hold which means that we have $p, \tilde{p} \in (0,1)$ which leads to a contradiction.

\end{example}

Now that we have shown that a circuit game will not increase all player's costs from information distribution simultaneously, we can prove that information cannot harm the player who receives it.
	
	\begin{theorem} \label{theorem:circuit}
		Any circuit game is immune to IBP.
	\end{theorem}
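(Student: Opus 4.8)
The plan is to argue by contradiction: assume the information expansion strictly increases the cost of type $(1,1)$, and show that it must then strictly increase the cost of \emph{every} player type, contradicting Proposition \ref{prop:nonempty}.

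First I would normalise the instance. By Proposition \ref{prop:circuit} the underlying network is simple, so inside its own ring a population can only route along the two arcs joining its origin and destination, and hence every type has at most two strategies. A type whose information set induces a single strategy contributes a constant load, which can be folded into the cost functions (keeping them non-decreasing), so all such types may be discarded — with the sole exception of $(1,1)$, whose strategy set is the one that grows. Moreover, all two-strategy types of one population share the same pair of arcs and can be merged into a single effective type with the summed demand. This leaves at most $n+1$ active types: one effective type per population plus $(1,1)$, which holds only the arc $s_1$ before the expansion and both arcs $s_1,s_2$ of its ring afterwards (if the strategy set of $(1,1)$ does not genuinely grow then $\mathcal M$ and $\tilde{\mathcal M}$ have the same equilibria and IBP is vacuous). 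As in the sketch of Proposition \ref{prop:nonempty}, the $n$ distinct OD pairs can then be taken to lie on a circuit with $2n$ edges.

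Now suppose, for contradiction, that $C_{(1,1)}(\bm x) < C_{(1,1)}(\bm{\tilde x})$. At the ICUE $\bm{\tilde x}$ the type $(1,1)$ uses a minimum-cost arc, so $C_{(1,1)}(\bm{\tilde x})$ is the smaller of the costs of $s_1$ and $s_2$ at $\bm{\tilde x}$; since $C_{(1,1)}(\bm x)$ is exactly the cost of $s_1$ at $\bm x$, both arcs cost strictly more at $\bm{\tilde x}$ than $s_1$ did at $\bm x$, and in particular $s_1$ itself is strictly more expensive at $\bm{\tilde x}$. As the $c_e$ are non-decreasing, some edge of $s_1$ therefore carries strictly more load at $\bm{\tilde x}$ than at $\bm x$. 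I would then propagate these strict load increases around the circuit: parametrising each effective type's split between its two arcs by a scalar in $[0,1]$, the ICUE conditions before and after the expansion turn, via monotonicity of the $c_e$, into a system of linear inequalities in these parameters and the non-negative demands — the same manipulation as in the $n=2$ case of the Example, now with $n$ populations. The goal is to read off from this system, together with the inequality forced by $(1,1)$, that every type's cost strictly increases; this contradicts Proposition \ref{prop:nonempty}, and hence no such expansion can exist.

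The main obstacle is that last step: pushing the two-population inequality chase of the Example through for arbitrary $n$, for arbitrary placement of the OD pairs on a ring, and for circuits built from several rings (some of which may not meet the ring of $(1,1)$, in which case their equilibria are unchanged and only the inequalities on $(1,1)$'s ring are active). One must simultaneously handle, for each population, the boundary cases in which its flow sits entirely on one arc and the case in which it is split — the analogue of the $p,\tilde p\in\{0,1\}$ versus interior analysis in the Example — and then eliminate the demands using non-negativity. I expect this to collapse, as it does for $n=2$, to a sign contradiction: a single quantity forced to be both below and above the same split parameter $p$.
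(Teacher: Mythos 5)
Your setup (contradiction hypothesis, reduction via Proposition~\ref{prop:circuit} to a simple ring with two arcs per population, folding single-strategy types into the cost functions, merging identical two-strategy types, discarding populations whose edge sets do not meet $E_1$) matches the paper's opening moves. But from that point on your route diverges from the paper's, and the divergence is where the gap sits. You aim to show that if $C_{(1,1)}$ strictly increases then \emph{every} type's cost strictly increases, and then invoke Proposition~\ref{prop:nonempty}. Two problems. First, that intermediate implication is never argued: you observe that some edge of $s_1$ must carry strictly more load, and then say you would ``propagate these strict load increases around the circuit'' into a system of inequalities from which the conclusion can be ``read off'' --- but you explicitly flag this as the main obstacle and only ``expect'' it to collapse for arbitrary $n$. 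That is the entire content of the theorem for $n>2$; without it you have a plan, not a proof. Second, the implication you are after is stronger than anything established elsewhere in the paper: the Example proves Proposition~\ref{prop:nonempty} for $n=2$ by assuming \emph{all} costs increase and deriving a contradiction from the conjunction of all $n+1$ inequalities; it does not show that one cost increasing forces the others to increase. A population far from $(1,1)$'s OD pair that keeps an interior split has its two arc costs equalised at both equilibria, and there is no obvious reason its common cost must go up; so the claim you need may simply fail, and the case analysis over boundary versus interior splits for $n$ populations does not obviously ``collapse to a single sign contradiction'' the way it does for $n=2$.

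The paper avoids this entirely. It uses Proposition~\ref{prop:nonempty} only to conclude that the set $B$ of types whose cost does \emph{not} increase is nonempty (the set $A$ of types whose cost increases being nonempty by the contradiction hypothesis), and then runs a path-partition argument: partition all paths into $S_A$ (cost strictly increased) and $S_B$ (cost did not increase); show via the ICUE conditions that types in $A$ place no flow on $S_B$ before the expansion and types in $B$ place no flow on $S_A$ after it, so aggregate demand can only move from $S_A$ to $S_B$; and then prove by induction on edges and populations (Claim 3) that such a demand shift forces $\max_{s \in S_A}\{C(s,\bm{x})-C(s,\bm{\tilde{x}})\} \geq \min_{s \in S_B}\{C(s,\bm{x})-C(s,\bm{\tilde{x}})\}$, which contradicts the definitions of $S_A$ and $S_B$ since the left side is negative and the right side is nonnegative. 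This sidesteps the per-population boundary/interior case analysis that your approach would have to tame. If you want to salvage your plan, you would need either to prove the ``one increases $\Rightarrow$ all increase'' implication directly, or to weaken your target to something like the paper's Claim 3 and supply the induction.
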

	\begin{proof}
		By the definition of IBP, there exists an information type whose information set expands. Assume without loss of generality that type $(1,1)$ are those players with expanded information sets. To reach a contradiction, assume that $C_{(1,1)}(\bm{\tilde{x}})>C_{(1,1)}(\bm{x})$ where $\bm{x}$ and $\bm{\tilde{x}}$ are the ICUEs reached before and after the information set of type $(1,1)$ is expanded respectively. 
		
		Since each player's relevant network is a circuit, the possible paths must be divided into the two directions that one can travel around the circuit. By Proposition \ref{prop:circuit}, there is only a single edge that can connect any two nodes, so each population must have two linearly independent paths available to them. Player type $(1,1)$ with restricted information set has only one possible path before expansion and player type $(1,2)$ has two paths available to them, without loss of generality, $S_{(1,1)}=\{s_a\}$ and $S_{(1,2)}=\{s_a, s_b\}$.
		
		For any two distinct $i,j \in N$, either $E_i \cap E_j = \emptyset$ or $E_i \cap E_j = \mathcal{C}$. If $E_1 \cap E_j = \emptyset$ then we do not need to consider population $j$ as it will not affect the equilibrium costs of $(1,1)$. So assume that for all $j \in N$ we have $E_1 \cap E_j = \mathcal{C}$. Now suppose that type $k_j \in K_j$ only has one choice of path. Then we can consider an equivalent game where the costs of resources $e \in E_{(j,k_j)}$ are increased to $c_e(f_e(\bm{x})+d_{jk_j})$. So we can assume that for any $j \in N \backslash \{1\}$, there exists only one information type $(j,2)$ where each player has full information about relevant resources.
		
		If $(1,1)$ does not choose $s_b$ after the information set expansion then the ICUE remains unchanged and we are done. So suppose we have $\tilde{\bm{x}}^{s_b}_{11} >0$ and $\tilde{\bm{x}}^{s_a}_{11} < \bm{x}^{s_a}_{11}= d_{11}$. 
		
		Suppose that $\sum_{e \in s_a} f_e(\tilde{\bm{x}}) \leq \sum_{e \in s_a} f_e(\bm{x})$. If $\tilde{\bm{x}}^{s_a}_{11}=0$, then by definition of ICUE $\sum_{e \in s_a}c_e(f_e(\tilde{\bm{x}})) \geq \sum_{e \in s_b}c_e(f_e(\tilde{\bm{x}}))$ and otherwise, if $\tilde{\bm{x}}^{s_a}_{11}>0$ then $\sum_{e \in s_a}c_e(f_e(\tilde{\bm{x}})) = \sum_{e \in s_b}c_e(f_e(\tilde{\bm{x}}))$. Then it follows that $C_{11}(\tilde{\bm{x}})=\sum_{e \in s_b}c_e(f_e(\tilde{\bm{x}})) \leq \sum_{e \in s_a} c_e(f_e(\tilde{\bm{x}})) \leq \sum_{e \in s_a}c_e( f_e(\bm{x}))=C_{11}(\bm{x})$ since $c_e$ is continuous and non-decreasing. Hence, a contradiction.
		
		So it must be the case that $\sum_{e \in s_a} f_e(\tilde{\bm{x}}) > \sum_{e \in s_a} f_e(\bm{x})$. Any population $k \in N\backslash \{1\}$ has two possible strategies that must contain elements from both $s_a$ and $s_b$. For each $k$, we have corresponding strategies $s_{k_1}$ and $s_{k_2}$. 
		
		We can divide the player types into two sets as follows $A=\{i \in N, j \in \{1,2\}: C_{ij}(\bm{\tilde{x}}) > C_{ij}(\bm{x}) \}$ and $B=\{i \in N, j \in \{1,2\}: C_{ij}(\bm{\tilde{x}}) \leq C_{ij}(\bm{x}) \}$. By the contradiction assumption, $A$ is not empty and using Proposition \ref{prop:nonempty} we know that $B$ is nonempty.
		
		All possible paths between any two nodes from the irredundant network $\hat{G}$ form the set $\mathcal{S}$. Divide this into two distinct sets as $S_A=\{s \in \mathcal{S}: C(s,\bm{\tilde{x}}) > C(s,\bm{x}) \}$ and $S_B=\{s \in \mathcal{S}: C(s,\bm{\tilde{x}}) \leq C(s,\bm{x}) \}$. Consequently, we must have that $\max_{s \in S_A}\{C(s,\bm{x})-C(s,\bm{\tilde{x}})\}<0$ and $\min_{s \in S_B}\{C(s,\bm{x})-C(s,\bm{\tilde{x}})\}\geq0$.
		
		\textbf{Claim 1:} If $i \in A$ and $s \in S_B$ then $x_i^s=0$.
		
		This follows from $C(s,\bm{x}) \geq C(s,\bm{\tilde{x}})$ by definition of $S_B$. Then by definition of ICUE, $C(s, \bm{x}) \geq C_i(\bm{\tilde{x}})$. By definition of $A$, we must have $C_i(\bm{\tilde{x}}) > C_i(\bm{x})$. Hence, $x_i^s=0$. 
		
		\textbf{Claim 2:} If $i \in B$ and $s \in S_A$ then $\tilde{x}_i^s=0$.
		
		This is true since $C(s,\bm{\tilde{x}}) > C(s,\bm{x})$ by definition of $S_B$. By definition of ICUE, $C(s, \bm{x}) \geq C_i(\bm{x})$. Finally, by definition of $A$ we must have $C_i(\bm{x}) > C_i(\bm{\tilde{x}})$. Hence, $\tilde{x}_i^s=0$.
		
		Let the demands for paths in $S_A$ and $S_B$ before and after information expansion be defined as $d_A= \sum_{s \in S_A} \sum_{i \in N} x_i^s$, $\tilde{d}_A= \sum_{s \in S_A} \sum_{i \in N} \tilde{x}_i^s$, $d_B= \sum_{s \in S_B} \sum_{i \in N} x_i^s$, $\tilde{d}_B= \sum_{s \in S_B} \sum_{i \in N} \tilde{x}_i^s$. It follows from Claims 1 and 2 that we have $\tilde{d}_A \leq d_A$ and $\tilde{d}_B \geq d_B$. Since $A$ and $B$ are nonempty, it also follows that both $S_A$ and $S_B$ are nonempty.
		
		\textbf{Claim 3:} Let $S_{\alpha},S_\beta$ be any nonempty partition of $\mathcal{S}$. If we have $\tilde{d}_\alpha \leq d_\alpha$ and $\tilde{d}_\beta \geq d_\beta$, then \[ \max_{s \in S_\alpha} \left\{ C(s,\bm{x})-C(s,\bm{\tilde{x}})\right\} \geq \min_{s \in S_\beta} \left\{ C(s,\bm{x})-C(s,\bm{\tilde{x}})\right\}.\]
		
		We will prove Claim 3 by induction on the number of edges and the number of populations. Start with the base case of a circuit with three edges and one population. All possible paths of the network are $\mathcal{S}=\{e_1,e_2,e_3,e_1e_2,e_1e_3,e_2e_3\}$. Suppose that without loss of generality the population's strategy set is $S=\{e_1e_2, e_3\}$ and their information set before expansion is $E=\{e_1,e_2\}$. We have a two-terminal game that is SLI so by Theorem \ref{theorem:acemloglu} it is immune to IBP. For any $s \in \mathcal{S}$ such that $s \notin S$, $s$ contributes no demand to $d_\alpha,\tilde{d}_\alpha,d_\beta,\tilde{d}_\beta$ hence any such $s$ can be randomly assigned to one of the sets $S_\alpha$ and $S_\alpha$. Since $e_1e_2 \in E$, the demand for this strategy can only reduce after the information expansion hence $e_1e_2 \in S_\alpha$. The demand for $e_3$ can only increase after the information expansion so $e_3 \in S_\beta$. Since there is no IBP, we must have
		\[
		C(e_1e_2, \bm{x}) \geq
		\begin{dcases}
		C(e_1e_2, \bm{\tilde{x}}) & \text{if $C(e_1e_2, \bm{\tilde{x}}) \leq C(e_3, \bm{\tilde{x}})$} \\
		C(e_3, \bm{\tilde{x}}) & \text{if $C(e_1e_2, \bm{\tilde{x}}) \geq C(e_3, \bm{\tilde{x}})$}
		\end{dcases}
		\]
		If $C(e_1e_2, \bm{\tilde{x}}) \leq C(e_3, \bm{\tilde{x}})$ then $ C(e_1e_2, \bm{x})-C(e_1e_2, \bm{\tilde{x}})\geq 0$. If $C(e_1e_2, \bm{\tilde{x}}) \geq C(e_3, \bm{\tilde{x}})$, then $C(e_3, \bm{x}) \leq C(e_3, \bm{\tilde{x}})$ as $f_{e_3}(\bm{x})<f_{e_3}(\bm{\tilde{x}})$. Hence, $\max_{s \in S_\alpha} \left\{ C(s,\bm{x})-C(s,\bm{\tilde{x}})\right\} \geq \min_{s \in S_\beta} \left\{ C(s,\bm{x})-C(s,\bm{\tilde{x}})\right\}$.
		
		Now consider a subdivision of this circuit. The same reasoning holds, hence it is true for a circuit with any number of edges. Now we assume that Claim 3 is true for $n-1$ populations on a circuit with an arbitrary number of edges. Now we will try to show how the addition of another population will not affect this property. Suppose that the information of player type $(1,1)$ is the population with the expanded information set with strategy sets before $
		\tilde{S}_{11} = \{ s_\alpha, s_\beta \}$. If for any $s \in \mathcal{S}$ such that $s \notin S_{11} \cup ... \cup S_{n2}$, then $s$ contributes no demand to $d_\alpha,\tilde{d}_\alpha,d_\beta,\tilde{d}_\beta$ and hence can be arbitrarily assigned to one of the sets $S_\alpha$ and $S_\beta$.
		We can assume $C(s_\alpha,\bm{x}) \geq C(s_\beta,\bm{x})$ since otherwise $C(s,\bm{x})-C(s,\bm{\tilde{x}})=0$ $\forall s \in \mathcal{S}$ and Claim 3 is immediately true. Hence, we know that the demand for $s_\alpha$ will strictly reduce and the demand for $s_\beta$ will strictly increase. 
		
		For any $i \in \{2,...,n\}$ let the strategy set be $S_i=\{s_{\alpha i},s_{\beta i}\}$. It must be the case that $\forall i \in \{2,...,n\}$ if demand for $s_{\alpha i}$ increases then the demand for $s_{\beta i}$ must reduce. Suppose that $\exists i \in \{2,...,n\}$ such that the demands for $s_{\alpha i},s_{\beta i}$ remain the same, then we can play an equivalent game of $n-1$ populations, for which we assumed the claim to be true. So in each population, we must have a strict increase in demand for one strategy and strict decrease for the other. Since we assumed $\tilde{d}_\alpha \leq d_\alpha$ and $\tilde{d}_\beta \geq d_\beta$ and that both $S_\alpha$ and $S_\beta$ are nonempty, in every possible allocation of strategies to $S_\alpha$ and $S_\beta$ there exists $i \in \{1,...,n\}$ such that exactly one strategy of $S_i$ belongs to $S_\alpha$. Hence, there always exists a population $i \in \{1,...,n\}$ whose strategies belong in both $S_\alpha$ and $S_\beta$. Without loss of generality, let the demand for $s_{\alpha i}$ increase and belong to $S_\alpha$ and demand for $s_{\beta i}$ reduce and belong to $S_\beta$. Then we must have that $C_i(s_{\alpha i }, \bm{\tilde{x}}) \leq C_i(s_{\beta i }, \bm{\tilde{x}})$ and $C_i(s_{\alpha i }, \bm{x}) \geq C_i(s_{\beta i }, \bm{x})$. Therefore we have $C_i(s_{\alpha i }, \bm{x}) - C_i(s_{\alpha i }, \bm{\tilde{x}}) \geq C_i(s_{\beta i }, \bm{x}) - C_i(s_{\beta i }, \bm{\tilde{x}})$. This concludes the induction step and so we have proved Claim 3.
		
Using the partition of $S_A$ and $S_B$ and the three claims, we have the contradiction:  
	\[ 0 > \max_{s \in S_A} \left\{ C(s,\bm{x})- C(s,\bm{\tilde{x}})\right\} \geq \min_{s \in S_B} \left\{ C(s,\bm{x})-C(s,\bm{\tilde{x}})\right\} \geq 0 \]
	Therefore, we have reached a contradiction.
	\end{proof}

An interesting application of this theorem is considering pedestrian exit-routing in sports stadiums, such as in Figure \ref{fig:stadium}. Suppose that player populations are those who are seated in the same block and wish to use the same mode of transport. Knowledge of the layout can differ between people and can be distributed through signs or movement restrictions provided by the stadium. If pedestrians are only allowed to exit their seat block through a single exit, as in Figure \ref{fig:stadium}, then the overlapping network of all populations is a ring. Hence, information cannot harm the expected travel times of exiting the stadium. However, if visitors are allowed to walk between seat blocks before exiting the stadium the network no longer has immunity to IBP. These results should, therefore, have importance in planning evacuation routes. 

	\begin{center}
		\begin{figure} [t]
			\begin{center}
				\includegraphics[width=0.4\textwidth,trim={0 17cm 0cm 1.8cm},clip]{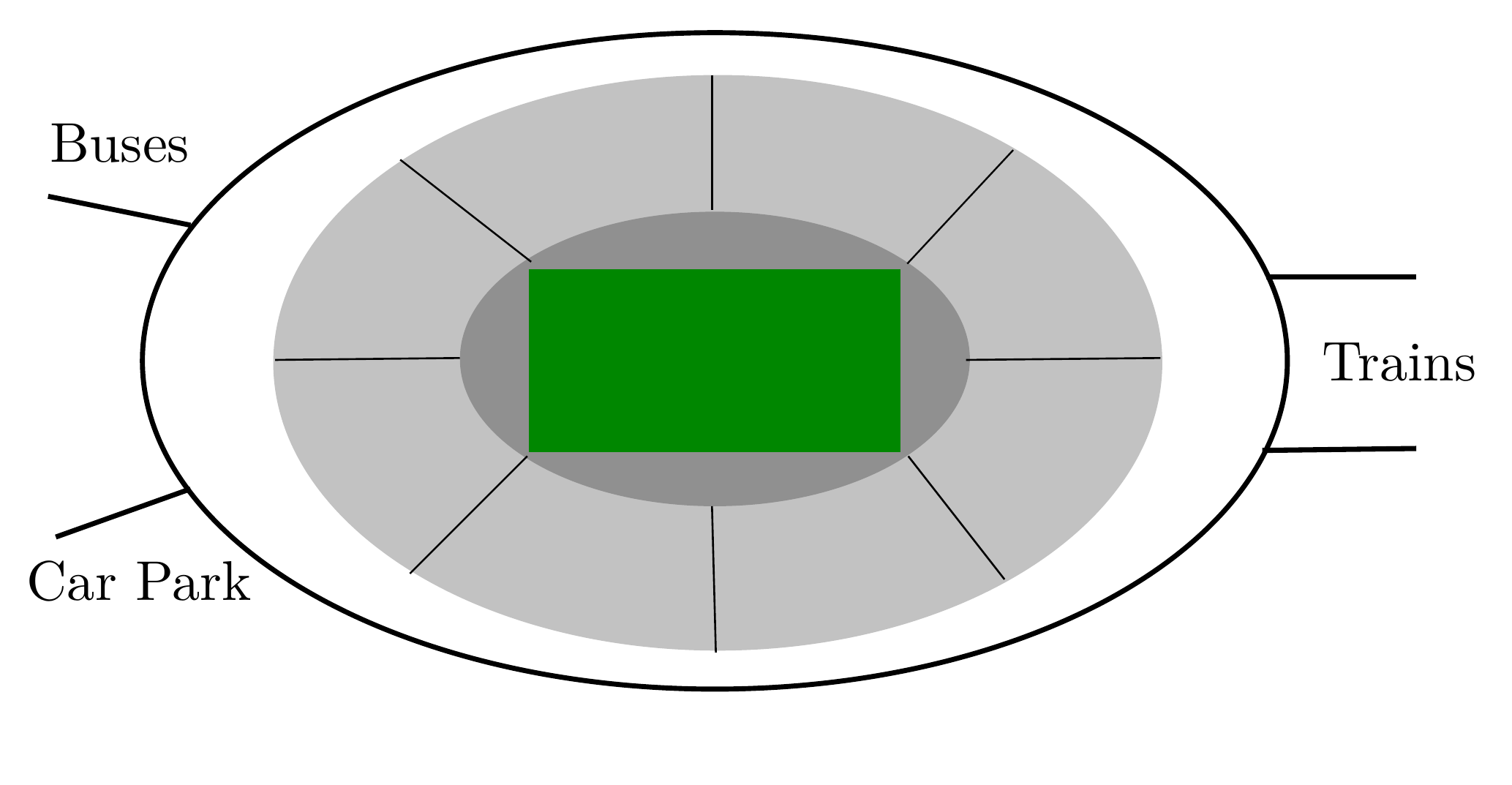}
				\hspace{0.2cm}
				\begin{tikzpicture}[shorten >=2pt, thick]
				\node[circle,draw] (bus) at (-0.8,0.8) {};
				\node[circle,draw] (car) at (-0.8,-0.8) {};
				\node[circle,draw] (A) at (0,0.4) {};
				\node[circle,draw] (B) at (1,1) {};
				\node[circle,draw] (C) at (2,1) {};
				\node[circle,draw] (D) at (3,0.4) {};
				\node[circle,draw] (a) at (0.6,0.2) {};
				\node[circle,draw] (b) at (1.2,0.5) {};
				\node[circle,draw] (c) at (1.8,0.5) {};
				\node[circle,draw] (d) at (2.4,0.2) {};
				\node[circle,draw] (train) at (3.6,0.4) {};
				\node[circle,draw] (trains) at (3.6,-0.4) {};
				\node[circle,draw] (E) at (3,-0.4) {};
				\node[circle,draw] (F) at (2,-1) {};
				\node[circle,draw] (G) at (1,-1) {};
				\node[circle,draw] (H) at (0,-0.4) {};
				\node[circle,draw] (e) at (2.4,-0.2) {};
				\node[circle,draw] (f) at (1.8,-0.5) {};
				\node[circle,draw] (g) at (1.2,-0.5) {};
				\node[circle,draw] (h) at (0.6,-0.2) {};
				\draw[-] (A) to[bend left=0] node[above] {} (bus);
				\draw[-] (bus) to[bend right=0] node[above] {} (A);
				\draw[-] (H) to[bend left=0] node[above] {} (car);
				\draw[-] (car) to[bend right=0] node[above] {} (H);
				\draw[-] (A) to[bend left=20] node[above] {} (B);
				\draw[-] (B) to[bend right=20] node[above] {} (A);
				\draw[-] (B) to[bend left=0] node[above] {} (C);
				\draw[-] (C) to[bend right=0] node[above] {} (B);
				\draw[-] (C) to[bend left=20] node[below] {} (D);
				\draw[-] (D) to[bend right=20] node[below] {} (C);
				\draw[-] (D) to[bend left=0] node[below] {} (E);
				\draw[-] (E) to[bend right=0] node[below] {} (D);
				\draw[-] (D) to[bend left=0] node[below] {} (train);
				\draw[-] (train) to[bend right=0] node[below] {} (D);
				\draw[-] (trains) to[bend left=0] node[below] {} (E);
				\draw[-] (E) to[bend right=0] node[below] {} (trains);
				\draw[-] (E) to[bend left=20] node[below] {} (F);
				\draw[-] (F) to[bend right=20] node[below] {} (E);
				\draw[-] (F) to[bend left=0] node[above] {} (G);
				\draw[-] (G) to[bend right=0] node[above] {} (F);
				\draw[-] (H) to[bend right=0] node[below] {} (A);
				\draw[-] (A) to[bend left=0] node[below] {} (H);
				\draw[-] (H) to[bend right=20] node[below] {} (G);
				\draw[-] (G) to[bend left=20] node[below] {} (H);
				\draw[-] (a) to[bend left=0] node[above] {} (A);
				\draw[-] (A) to[bend left=0] node[above] {} (a);
				\draw[-] (b) to[bend left=0] node[above] {} (B);
				\draw[-] (B) to[bend left=0] node[above] {} (b);
				\draw[-] (c) to[bend left=0] node[above] {} (C);
				\draw[-] (C) to[bend left=0] node[above] {} (c);
				\draw[-] (d) to[bend left=0] node[above] {} (D);
				\draw[-] (D) to[bend left=0] node[above] {} (d);
				\draw[-] (e) to[bend left=0] node[above] {} (E);
				\draw[-] (E) to[bend left=0] node[above] {} (e);
				\draw[-] (f) to[bend left=0] node[above] {} (F);
				\draw[-] (F) to[bend left=0] node[above] {} (f);
				\draw[-] (g) to[bend left=0] node[above] {} (G);
				\draw[-] (G) to[bend left=0] node[above] {} (g);
				\draw[-] (h) to[bend left=0] node[above] {} (H);
				\draw[-] (H) to[bend left=0] node[above] {} (h);
				\end{tikzpicture}
				\caption{Evacuation of a sports stadium is immune to IBP.}
				\label{fig:stadium}
			\end{center}
		\end{figure}
	\end{center}
	
	 \section{IBP for Social Cost} \label{sec:ibpsc}
	  Thus far, we define IBP to be the comparison between equilibrium costs of a player whose information set is expanded. A natural weakness of using IBP to analyse the system as a whole is that it does not incorporate any effects that the information expansion has on all players. From a mechanism design perspective, it is more relevant to compare the social costs of the ICUEs. In this section, we define our own version of a paradox, similar to that of IBP, and show that its occurrence is independent of network topology.
	
	 Define \textit{informational Braess' paradox for the social cost} (IBPSC) as the case when one type of player has their information set strictly expanded which causes an increase to the social cost. IBP is a specific case of IBPSC that occurs when information harms the informed player.
	
	 In order to consider how the social cost will change in an ICUE equilibrium, we need the following proposition for the simplest circuit network: a two-node, two-edge ring most commonly referred to as the Pigou network.
	 	\begin{center}
		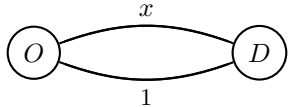
\begin{figure} [h]
			\begin{center}
				\begin{tikzpicture}[shorten >=2pt, thick]
				\node[circle,draw] (A) at (0,0) {$O$};
				\node[circle,draw] (D) at (3,0) {$D$};
				\draw[-] (A) to[bend left=20] node[above] {$x$} (D);
				\draw[-] (D) to[bend right=20] node[above] {} (A);
				\draw[-] (A) to[bend left=-20] node[below] {\small $1$} (D);
				\draw[-] (D) to[bend right=-20] node[below] {} (A);
				\end{tikzpicture}
				\end{center}
				\caption{A Pigou network.}
				\label{fig:Pigou}
				\end{figure}
				\end{center}
				
	 \begin{prop} \label{prop:pigou}
	 Any two terminal network with at least two distinct paths has the Pigou network embedded in it. 
	 \end{prop}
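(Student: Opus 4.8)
The plan is to reduce the statement to the elementary fact that two distinct simple $O$--$D$ paths must together contain a cycle, and then to observe that any cycle is itself a subdivision of the Pigou network. Under the paper's notion of embedding, each edge of the embedded network is sent to an internally disjoint path of the host network, and incident edges map to paths sharing the image of the common endpoint; hence the Pigou network --- two nodes joined by two parallel edges --- embeds in a network $G$ precisely when $G$ contains two vertices joined by two internally vertex-disjoint paths, i.e.\ precisely when $G$ contains a cycle.

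So the first step is to produce a cycle. Fix two distinct simple $O$--$D$ paths $P$ and $Q$, which exist by hypothesis. Their edge sets have nonempty symmetric difference (otherwise $P=Q$), and in this symmetric difference every vertex has even degree: the interior vertices of a simple path contribute degree $2$ and the endpoints $O,D$ contribute degree $1$ to each of $P$ and $Q$, so all parities cancel. A nonempty graph in which every vertex has even degree contains a cycle, so $G$ contains a cycle $Z$. Equivalently, and more constructively, one may walk $P$ and $Q$ from $O$, let $u$ be the last vertex up to which they agree and $w$ the first vertex strictly after $u$ that lies on both; the $u$--$w$ arcs of $P$ and of $Q$ are then internally vertex-disjoint, of length at least one, and begin with different edges, so their union is such a cycle.

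Given the cycle $Z$, I would build the embedding explicitly: choose any two vertices $a,b$ on $Z$, which split $Z$ into two $a$--$b$ arcs $A_1,A_2$; send the two nodes of the Pigou network to $a$ and $b$, and send its two parallel edges to the paths $A_1$ and $A_2$. These arcs are internally vertex-disjoint and between them use each edge of $Z$ exactly once, so the induced maps on resources are injective and respect incidences, and the Pigou network is embedded in $G$.

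I do not anticipate a genuine obstacle here; the work is entirely bookkeeping. The one point requiring care is the definition of embedding --- specifically that the two chosen arcs really are internally vertex-disjoint, which is exactly what makes the edge maps injective --- together with the use of the fact that the paper's paths are by definition acyclic, so that $P$ and $Q$ are honest simple paths and the ``last agreement'' vertex $u$ and ``first reunion'' vertex $w$ in the constructive variant are well defined.
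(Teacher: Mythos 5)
Your proof is correct. Both arguments turn on the same pivot --- the Pigou network embeds in $G$ exactly when $G$ contains a cycle --- but you reach the cycle by a different route. The paper argues the contrapositive: if no cycle exists then the connected network is a tree, and trees have a unique path between any pair of nodes, contradicting the hypothesis of two distinct $O$--$D$ paths. You instead work forward from the two given paths, extracting the cycle from their edge-set symmetric difference via the parity argument (or, constructively, from the first divergence and reunion vertices), and then exhibit the embedding explicitly by splitting the cycle into two internally disjoint arcs. What your version buys is constructiveness and a verification that the object produced really satisfies the paper's definition of embedding (injective resource maps respecting incidences), which the paper leaves implicit in the sentence ``any cyclic network must embed the 2-edge ring''; what the paper's version buys is brevity, at the cost of quoting the tree characterization as a black box. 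The one point worth being careful about in your constructive variant is that the $u$--$w$ arc of $Q$ need not run ``forward'' along $Q$, but internal disjointness still holds because the interior of the $u$--$w$ arc of $P$ avoids $Q$ entirely by the minimality of $w$; your symmetric-difference argument sidesteps this entirely and is the cleaner of the two.
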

	 \begin{proof}
	 Suppose the statement is not true, then there does exist a network $G$ with at least two distinct paths which does not embed the Pigou network. Since a 2-edge ring is the simplest network that contains a cycle, any cyclic network must embed the 2-edge ring. Therefore, $G$ is acyclic. Any acyclic undirected connected network must be a tree. In a tree, the number of possible paths between any two pairs of nodes is necessarily one. This contradicts the original statement.
	 \end{proof}
Now we show that there will always be a set of cost functions for any network that allows for IBPSC to occur.
	 \begin{theorem}
	 There exists an assignment of information sets and cost functions on any two-terminal network such that IBPSC occurs.
	 \end{theorem}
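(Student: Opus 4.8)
The plan is to reduce the statement to the Pigou network of Figure~\ref{fig:Pigou} by Proposition~\ref{prop:pigou} and then to exhibit a concrete instance there. (The claim is vacuous if $G$ is a single path, since then no information set can be strictly expanded while keeping a non-empty strategy set, so assume $G$ has at least two distinct $O$-$D$ paths.) By Proposition~\ref{prop:pigou}, $G$ contains an embedded copy of the Pigou network: its two parallel edges are realised by two edge-disjoint routes $P_1,P_2$ between two nodes of $G$, together with fixed connecting paths to $O$ and $D$. I would put all of the structure on this copy: give the cost $c(x)=x$ to one edge of $P_1$, the constant cost $c(x)=1$ to one edge of $P_2$, the cost $c\equiv 0$ to every other edge of $P_1\cup P_2$ and of the connectors, and a large constant cost $M>1$ to every remaining edge of $G$. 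Since no $O$-$D$ route that uses an $M$-edge costs less than $1$, no $M$-edge is used at any ICUE, and the game is equivalent to a congestion game on the bare Pigou network with edges $e_1$ (cost $x$) and $e_2$ (cost $1$).

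On this Pigou instance I would use one population with two information types and unit total demand. Type $(1,2)$ has demand $\tfrac12$ and full information $\{e_1,e_2\}$; type $(1,1)$ has demand $\tfrac12$ and the restricted information $E_{(1,1)}=\{e_2\}$, which we then strictly expand to $\tilde E_{(1,1)}=\{e_1,e_2\}$. Before the expansion type $(1,1)$ is forced onto $e_2$, while type $(1,2)$ strictly prefers $e_1$ (its load there is $\tfrac12$, so $c_{e_1}=\tfrac12<1=c_{e_2}$); hence the unique ICUE $\bm x$ carries load $\tfrac12$ on each edge and $SC(\bm x)=\tfrac12\cdot 1+\tfrac12\cdot\tfrac12=\tfrac34$. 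After the expansion the only myopic improvement for type $(1,1)$ is to shift towards $e_1$, and since any load below $1$ on $e_1$ is strictly cheaper than $e_2$ while the total demand is $1$, the unique ICUE $\tilde{\bm x}$ places the whole unit of demand on $e_1$, so $SC(\tilde{\bm x})=1>\tfrac34=SC(\bm x)$. Pulling $\bm x$ and $\tilde{\bm x}$ back through the embedding yields ICUEs of the game on $G$ with the same social costs, so IBPSC occurs. Note that the informed type $(1,1)$ is itself unharmed (its cost is $1$ both before and after): the paradox here is produced entirely by the rerouting that the newly informed type forces on the other type.

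The two equilibrium computations on Pigou are routine: each amounts to checking that no type has a profitable myopic deviation, plus a short uniqueness check in each case. The step that needs care, and where I expect to spend most of the argument, is the reduction to Pigou on an arbitrary $G$: one must verify that the embedding of Proposition~\ref{prop:pigou} really does supply two routes whose loads can be controlled independently, and that after the auxiliary costs are fixed no third $O$-$D$ route becomes attractive at an ICUE -- in particular none that weaves between $P_1$ and $P_2$ at shared interior vertices or that slips through an $M$-edge. Choosing the embedded routes to be internally vertex-disjoint (they can be taken as the two arcs of a cycle of $G$, which exists because $G$ is not a tree) forbids weaving, and $M>1$ forbids the $M$-edges; assembling these observations is the only non-mechanical point.
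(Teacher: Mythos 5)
Your proof is correct and follows essentially the same route as the paper's: reduce to an embedded Pigou network via Proposition~\ref{prop:pigou}, then exhibit an explicit instance with one restricted and one fully informed type in which the newly informed type's rerouting raises the social cost. Your write-up is in fact somewhat more careful than the paper's about why the reduction to the bare Pigou network is harmless (the zero-cost connectors and the $M$-cost padding of the non-embedded edges), but the underlying argument is the same.
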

 
\begin{proof}
In order for a network to have an IBPSC, at least one type of player must have an information set as a strict subset of their resources. Therefore, at least two distinct strategies must exist. If there exists two distinct strategies then, by Proposition \ref{prop:pigou}, the network must have the Pigou network embedded in it. Therefore, it suffices to show that there always exists an assignment of information sets and cost functions on the Pigou network such that IBPSC exists. To show this, we will use Pigou's example as shown in Figure \ref{fig:Pigou}.

Consider two populations with demands $d_1=1, \, d_2=1$. Suppose their information sets are $E_1=\{e_1\}$ and $E_2=\{e_1, \, e_2\}=E$. Let $c_{e_1}(\bm{x})= x$ and $c_{e_2}(\bm{x})= 2$. At equilibrium, type 1 will choose $e_1$ and type 2 will choose $e_1$ with a social cost of $3$. Now, if we expand type 1's information set to be $E_1=\{e_1, \, e_2\}=E$, then since $e_1$ costs strictly less than $e_2$, type 1 will switch their strategy and the new equilibrium has social cost of 4.
\end{proof}
 
	 Despite its similarity with BP, the occurrence of IBPSC is actually independent of the topology of the network. The IBPSC occurs because of the difference between the socially optimal cost and the cost of a UE. By having the ability to assign information sets, an ICUE outcome can be found with a cost strictly less than any UE. Any ICUE that is not a UE must be unstable as at least one player wishes to deviate from it if they have their information set expanded. There will always exist an assignment of information sets for any network where the social cost expands as information sets expand.
	
	 \begin{theorem}
	 Immunity to IBPSC is independent of network topology.
	 \end{theorem}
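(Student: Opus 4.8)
The plan is to establish the clean characterisation behind the statement: a network $G$ is immune to IBPSC if and only if $G$ is acyclic. This is the content that makes ``immunity to IBPSC is independent of network topology'' true in the relevant sense, namely the sense of contrast with Theorem~\ref{theorem:sli}: immunity to IBP is pinned down by the SLI condition, whereas immunity to IBPSC turns out to be pinned down only by acyclicity, which is orthogonal to all of the structural notions (LI, SLI, ring, circuit, prescribed coincident blocks) that govern BP and IBP. For instance the Pigou network of Figure~\ref{fig:Pigou} is a single LI network, hence SLI, hence immune to IBP by Theorem~\ref{theorem:sli}, yet it is cyclic and, as I will show, admits IBPSC; so no SLI-type, ring, or circuit condition has any bearing on immunity to IBPSC, and among all networks rich enough to support any routing choice (equivalently, containing a cycle) immunity never holds.

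For the forward direction I would reduce to the preceding theorem. If $G$ contains a cycle, then some two nodes $O, D \in V$ are joined by two internally disjoint paths $P$ and $P'$, so the two-terminal subnetwork $H := P \cup P'$ is a subdivision of the Pigou network (and in any case embeds it, by Proposition~\ref{prop:pigou}, since $H$ has at least two distinct paths). Applying the preceding theorem to $H$ yields a congestion game on $H$ whose information sets all lie inside $E(H)$ and under which expanding one type's information set strictly raises the social cost. I would then lift this game to $G$ unchanged: keep the same populations, keep $P$ and $P'$ as their strategies, keep the cost functions on the edges of $H$, and assign arbitrary cost functions to the remaining edges of $G$. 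Since every information set of the lifted game is contained in $E(H)$, no player's strategy ever uses an edge outside $H$, so all loads, all ICUEs, and all social costs are identical to those of the $H$-game both before and after the expansion; hence IBPSC occurs on $G$.

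The converse direction is immediate: if $G$ is acyclic it is a forest, so between any two of its nodes there is at most one path; therefore in every congestion game on $G$ each information type has a single strategy that does not change when its information set is enlarged, no load is ever altered by an information expansion, and IBPSC cannot occur. I expect the only genuine difficulty to be expository — choosing the precise formalisation of ``independent of network topology'' — and I would meet it by stating and proving the unambiguous equivalent ``$G$ admits IBPSC if and only if $G$ contains a cycle'', then observing that this criterion holds uniformly across all the topological families considered in Sections~\ref{sec:networks} and~\ref{sec:ibpsc}, so that immunity to IBPSC is detected by none of the structural conditions studied there. The one point to verify with care in the reduction is that the lifted object is a legitimate congestion game on $G$ with exactly the claimed ICUEs; this follows directly from the definitions once one notes that edges outside $H$ are irrelevant — they belong to no information set and carry no load.
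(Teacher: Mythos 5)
Your proposal is correct and follows essentially the same route as the paper: the paper derives this theorem as an immediate corollary of the preceding one, whose proof likewise reduces any network admitting a nontrivial information restriction to the embedded Pigou network of Proposition~\ref{prop:pigou} and runs the same two-type example on it. What you add — the explicit ``immune iff acyclic'' formalisation, the trivial converse for forests, and the careful lifting of the $H$-game to $G$ via information sets contained in $E(H)$ — is left implicit in the paper's one-sentence justification, and is a reasonable sharpening rather than a different argument.
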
	
	 Since no two terminal network is immune to IBPSC, it is simple to extend the same observation for multiple OD pairs. This result shows that there will always be an assignment of information sets and cost functions that can create IBPSC.
	
	\section{Concluding Remarks}
	
In this paper, we have looked at the selfish routing games where individuals do not have perfect knowledge of the network structure and how increasing their knowledge can affect the overall performance.	Specifically, we have identified a natural class of networks, i.e., rings, that are immune to performance deterioration, which is known in the literature as informational Braess' Paradox. This has settled a conjecture in \cite{Acemoglu}. We have also shown that under simple alternative definition of performance, which is in line with the complete information variant of Bress' Paradox, all networks are resistant to such phenomenon.

We believe that the identification of safe network structure is bound to fundamentally impact the design of transportation networks.
A similar argument can be made for the simulation of traffic flow, as different network structure will give rise to different behaviour by boundedly rational participants.

\bibliographystyle{abbrv}
\bibliography{bibliography3}

\end{document}